\newtheorem{theorem}{Theorem}[section]
\newtheorem{proposition}[theorem]{Proposition}
\newtheorem{corollary}[theorem]{Corollary}
\theoremstyle{definition}
\newtheorem{example}[theorem]{Example}
\theoremstyle{remark}
\newcommand{\id}{\mathrm{id}}
\newcommand{\sqbinom}{\genfrac{[}{]}{0pt}{}}
\newtheorem*{rep@theorem}{\rep@title}\newcommand{\newreptheorem}[2]{%
\newenvironment{rep#1}[1]{%
\def\rep@title{\bf #2 \ref{##1}}%
\begin{rep@theorem}}%
{\end{rep@theorem}}}
\definecolor{munsell}{rgb}{0.0, 0.5, 0.69}
\begin{document}


\title{Combinatorial results on barcode lattices}

\author{Alex Bouquet}
\address{\scriptsize{Department of Mathematics, UC Berkeley}}
\email{\scriptsize{abouquet@berkeley.edu}}

\author{Andr\'es R. Vindas-Mel\'endez}
\address{\scriptsize{Departments of Mathematics, UC Berkeley \& Harvey Mudd College}, \url{https://math.berkeley.edu/~vindas}}
\email{\scriptsize{andres.vindas@berkeley.edu; arvm@hmc.edu}}


\begin{abstract}
A barcode is a finite multiset of intervals on the real line. 
Jaramillo-Rodriguez (2023) previously defined a map from the space of barcodes with a fixed number of bars to a set of multipermutations, which presented new combinatorial invariants on the space of barcodes.
A partial order can be defined on these multipermutations, resulting in a class of posets known as combinatorial barcode lattices.
In this paper, we provide a number of equivalent definitions for the combinatorial barcode lattice, show that its M\"obius function is a restriction of the M\"obius function of the symmetric group under the weak Bruhat order, and show its ground set is the Jordan-H\"older set of a labeled poset.
Furthermore, we obtain formulas for the number of join-irreducible elements, the rank-generating function, and the number of maximal chains of combinatorial barcode lattices. 
Lastly, we make connections between intervals in the combinatorial barcode lattice and certain classes of matchings.
\end{abstract}


\maketitle


\section{Introduction}

A \emph{barcode} is a finite multiset of closed intervals on the real number line.
Barcodes appear in different contexts, including: topological data analysis as summaries of the persistent homology groups of a filtration \cite{ZomorodianCarlsson} and in graph theory as interval graphs \cite{LekkerkerkerBoland}. 
Recently, in \cite{Jaramillo-RodriguezThesis} and \cite{jaramillorodriguez2022barcode}, Jaramillo-Rodriguez developed combinatorial methods for analyzing barcodes for applications in topological data analysis and random interval graphs.

Jaramillo-Rodriguez introduced a map from the space of barcodes to certain equivalence classes of permutations of a multiset in which every element occurs exactly twice, which she calls \emph{double occurrence words}. 
Futhermore, she calls the set of all such words the space of \emph{combinatorial barcodes}.
By defining an order relation on this space that is based on the weak-Bruhat order, the resulting poset was shown to be a graded lattice, which is referred to as the \emph{combinatorial barcode lattice}.
In particular, the cover relations of this lattice were used to determine the set of barcode bases of persistence modules, which arise in topological data analysis.
While being of interest from a topological perspective, we focus on obtaining combinatorial results and thus treat the barcode lattice solely as a combinatorial object.

The paper is structured as follows:
\begin{itemize}
    \item Section \ref{sec:background} presents relevant background on combinatorial barcode lattices.

    \item We continue with Section \ref{sec:initial_results}, which presents some initial counting results on the combinatorial barcode lattice. 
    In particular, we determine the lattice's M\"obius function (Proposition \ref{prop:mobius}) and the lattice's number of join-irreducible elements (Theorem \ref{thm:join_irred}).
    We also realize the combinatorial barcode lattice as a partial order on the set of linear extensions of a labeled poset, connecting our work to a paper of Bj\"orner and Wachs \cite{bjornerWachsLinearExtensions}.

    \item In Section \ref{sec:rank_gen_func}, we determine the rank-generating function of the combinatorial barcode lattice (Theorem \ref{thm:rank_gen_func}) by presenting a different lattice which is more manageable to work with and show that it has the same rank-generating function as the combinatorial barcode lattice.

    \item Section \ref{sec:max_chains} is devoted to determining the number of maximal chains in the combinatorial barcode lattice (Theorem \ref{thm:max_chains}).

    \item Connections between the combinatorial barcode lattice and matchings are presented in Section \ref{sec:matchings}.

    \item Lastly, we conclude with further directions for future work in Section~\ref{sec:further_directions}.
    
\end{itemize}

\section{Background \& Preliminaries}\label{sec:background}

The original combinatorial barcode lattice $L(n,2)/\mathfrak S_n$ defined by Jaramillo-Rodriguez in \cite{jaramillorodriguez2022barcode} is a lattice whose elements are multipermutations of the multiset $\{\{1,1,2,2,\dots,n,n\}\}$ such that the first appearance of the element $i$ in the multipermutation appears before the first appearance of the element $i+1$ for each $i$.
Jaramillo-Rodriguez's power-$k$ barcode lattice $L(n,2^k+1)/\mathfrak S_n$ is defined similarly, but for multipermutations of the multiset
\[\{\{\underbrace{1,\dots,1}_{2^k+1},\underbrace{2,\dots,2}_{2^k+1},\dots,\underbrace{n,\dots,n}_{2^k+1}\}\}.\]

To develop a more general combinatorial theory, we relax the definition of the combinatorial barcode lattice.
We allow the multiplicities of elements in the multiset to be any positive integer and we do not require all multiplicities to be the same.
With this in mind, for $\mathbf{m}=(m_1,\dots,m_n)$, we define $\mathbf{BL}(\mathbf{m})$ to be the set of multiset permutations of the multiset $\{\{1^{m_1},\dots,n^{m_n}\}\}$, where for $i\in[n-1]$, we require the first occurrence of $i$ in the permutation to appear before the first occurrence of $i+1$.
We call the elements of this set ``\emph{barcodes}'' and we partially order them by declaring that a barcode $t$ covers a barcode $s$ (written $s\lessdot t$) if and only if $t$ differs from $s$ by a transposition of two distinct adjacent elements which are increasing in $s$ and decreasing in $t$.
We call the resulting poset the combinatorial barcode lattice.
It will follow from Proposition \ref{principal ideal} that $\mathbf{BL}(\mathbf m)$ is, in fact, a lattice.
(See Figure \ref{BL 2,2,2} for the Hasse diagram of a combinatorial barcode lattice.)

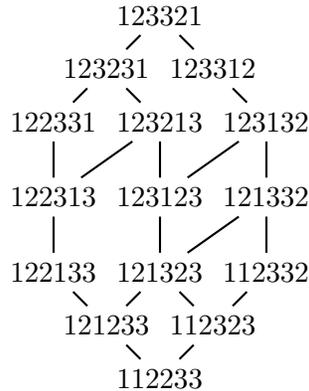
\begin{figure}[h]
\begin{center}
\begin{tikzpicture}[scale=0.7][node distance = 12mm and 12mm]
    \node (112233) at (0,0) {112233};

    \node [above left of=112233] (121233) {121233};
    \node [above right  of=112233] (112323)  {112323};
    
    \draw [thick] (112233) -- (112323);
    \draw [thick] (112233) -- (121233);
    
    \node [above left of=121233] (122133) {122133};
    \node [above right of=121233] (121323) {121323};
    \node [above right of=112323] (112332) {112332};
    
    \draw [thick] (121233) -- (122133);
    \draw [thick] (121233) -- (121323);
    
    \draw [thick] (112323) -- (121323);
    \draw [thick] (112323) -- (112332);

    \node [above of=122133] (122313) {122313};
    \node [above of=121323] (123123) {123123};
    \node [above of=112332] (121332) {121332};
    
    \draw [thick] (122133) -- (122313);
    
    \draw [thick] (121323) -- (123123);
    \draw [thick] (121323) -- (121332);

    \draw [thick] (112332) -- (121332);

    \node [above of=122313] (122331) {122331};
    \node [above of=123123] (123213) {123213};
    \node [above of=121332] (123132) {123132};
    
    \draw [thick] (122313) -- (122331);
    \draw [thick] (122313) -- (123213);
    
    \draw [thick] (123123) -- (123213);
    \draw [thick] (123123) -- (123132);

    \draw [thick] (121332) -- (123132);

    \node [above right  of=122331] (123231)  {123231};
    \node [above left of=123132] (123312) {123312};
    
    \draw [thick] (122331) -- (123231);
    
    \draw [thick] (123213) -- (123231);
    
    \draw [thick] (123132) -- (123312);

    \node [above right of=123231] (123321) {123321};

    \draw [thick] (123231) -- (123321);
    
    \draw [thick] (123312) -- (123321);
    
\end{tikzpicture}
\end{center}
\caption{The combinatorial barcode lattice $\mathbf{BL}(2,2,2)$.}
\label{BL 2,2,2}
\end{figure}

We see that $\mathbf{BL}(2,2,\dots,2)$ is the standard combinatorial (power-$0$) barcode lattice $L(n,2)/\mathfrak S_n$ and $\mathbf{BL}(2^k+1,2^k+1,\dots,2^k+1)$ is the power-$k$ barcode lattice $L(n,2^k+1)/\mathfrak S_n$ defined in \cite{jaramillorodriguez2022barcode}.
For convenience, we will denote the combinatorial barcode lattice with $n$ bars all of size $k$ by
\[\mathbf{BL}(k^n)=\mathbf{BL}(\underbrace{k,\dots,k}_{n\text{ times}}).\]
Equivalently, we can also write elements of the combinatorial barcode lattice $\mathbf{BL}(\mathbf m)$ as permutations of the totally ordered set
\[1_1,\dots,1_{m_1},2_1,\dots,2_{m_2},\dots,n_1,\dots,n_{m_n},\]
where we require that the subsequence consisting of the entries $1_1,2_1,3_1,\dots,n_1$ appears in exactly that order and that for each $i$ the subsequence consisting of the entries $i_1,i_2,\dots,i_{m_i}$ appears in exactly that order.
For an entry $i_j$ in a barcode $s$, we will refer to $i$ as the label and $j$ as the index.

Then, for $s\in\mathbf{BL}(\mathbf m)$, we define $\Phi_s(i_j)$ to be the number of entries of $s$ that occur before $i_j$ with a label larger than $i$.
For example, \[\text{ if } s=1_12_12_21_22_31_3\in\mathbf{BL}(3^2), \text{ then } \Phi_s(1_2)=1 \text{ and } \Phi_s(1_3)=3.\]
We observe that for any $i$, $\Phi_s(i_1)=0$, because we require the subsequence $1_1,2_1,3_1,\dots,n_1$ to appear in order, we can also think of $\Phi_s(i_j)$ as the number of entries between $i_1$ and $i_j$ with label larger than $i$.
Also, since the subsequence $i_1,i_2,\dots,i_{m_i}$ appears in order, we know that for any $i,j$, we have that $\Phi_s(i_j)\leq\Phi_s(i_{j+1})$.
Lastly, we observe that for $s\in\mathbf{BL}(m_1,\dots,m_n)$, there can be at most $\sum_{k=i+1}^nm_k$ entries between $i_1$ and $i_j$ whose label is larger than $i$.


\section{Initial counting results on the combinatorial barcode lattice}\label{sec:initial_results}

We now present a number of initial observations about the combinatorial barcode lattice $\mathbf{BL}(\mathbf m)$.
We begin by realizing that $\mathbf{BL}(\mathbf m)$ is a principal order ideal of the symmetric group under the weak Bruhat order, which immediately gives us the M\"obius function of $\mathbf{BL}(\mathbf m)$ and a characterization of when $\mathbf{BL}(\mathbf m)$ is distributive.
We then find the number of join-irreducible elements of $\mathbf{BL}(\mathbf m)$ and conclude this section by realizing that the combinatorial barcode lattice is a partial order on linear extensions of another poset.

Let us start by recalling that the symmetric group $S_n$ forms a lattice under the weak Bruhat order, which can be defined by the covering relation $\sigma\prec\tau$ if and only if $\sigma$ and $\tau$ (written in one line notation) differ exactly by a transposition of adjacent entries that appear in order in $\sigma$ and reversed in $\tau$.
We begin by noticing that the combinatorial barcode lattice $\mathbf{BL}(\mathbf m)$ is a principal order ideal of the symmetric group under the weak Bruhat order:

\begin{proposition}\label{principal ideal} The combinatorial barcode lattice $\mathbf{BL}(m_1,\dots,m_n)$ with $\sum_{i=1}^nm_i=M$ is isomorphic to a principal order ideal of the symmetric group $S_M$ in the weak Bruhat order.
\end{proposition}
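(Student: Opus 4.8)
The plan is to exhibit an explicit order-preserving bijection between $\mathbf{BL}(m_1,\dots,m_n)$ and the principal order ideal of $S_M$ generated by a single, carefully chosen permutation $w_0^{\mathbf m}$, where $M = \sum_i m_i$. The natural candidate for the maximal element is the barcode in which, after the forced initial subsequence $1_1, 2_1, \dots, n_1$, all remaining entries appear in the most "decreasing" admissible order: concretely, read off $n_{m_n}, n_{m_n - 1}, \dots, n_2, (n-1)_{m_{n-1}}, \dots, (n-1)_2, \dots, 2_{m_2}, \dots, 2_2$ after the initial run, subject to the constraint that each block $i_1, i_2, \dots, i_{m_i}$ stays in order. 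I would first identify this top element $\hat 1$ of $\mathbf{BL}(\mathbf m)$ and argue it is well-defined and unique by checking it has no entry that can be moved up by a cover relation.

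Next I would set up the embedding $\iota : \mathbf{BL}(\mathbf m) \hookrightarrow S_M$. Fix the total order $1_1 \prec 1_2 \prec \cdots \prec 1_{m_1} \prec 2_1 \prec \cdots \prec n_{m_n}$ on the $M$ symbols, and send each barcode $s$ (a word in these symbols) to the permutation in $S_M$ obtained by relabeling the symbols $1, 2, \dots, M$ according to this fixed order and reading $s$ in one-line notation. This map is injective, and crucially the cover relation on $\mathbf{BL}(\mathbf m)$ — swapping two \emph{distinct} adjacent labels that are increasing in $s$ and decreasing in $t$ — maps to an adjacent transposition in $S_M$ of the corresponding one-line entries, which are then increasing in $\iota(s)$ and decreasing in $\iota(t)$; this is exactly a cover in the weak Bruhat order. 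So $\iota$ is a poset embedding. The only subtlety is the word "distinct": a swap of two entries with the \emph{same} label is never a cover in $\mathbf{BL}(\mathbf m)$ precisely because the index-ordering constraint forbids it, so under $\iota$ no two such entries are ever adjacent-and-swappable into a Bruhat cover either — I would verify that the image of $\iota$ is closed in the right way.

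The heart of the argument is then to show that the image $\iota(\mathbf{BL}(\mathbf m))$ is exactly the principal order ideal $[\hat e, \iota(\hat 1)]$ in $S_M$, where $\hat e$ is the identity (which equals $\iota(\hat 0)$, the all-sorted barcode $1^{m_1} 2^{m_2} \cdots n^{m_n}$). One inclusion is immediate: since $\iota$ is a poset embedding and $\hat 0 \le s \le \hat 1$ for all $s \in \mathbf{BL}(\mathbf m)$, every $\iota(s)$ lies in the ideal. For the reverse inclusion I would use the standard characterization of the weak order on $S_M$ by inversion sets: $\sigma \le \tau$ iff $\mathrm{Inv}(\sigma) \subseteq \mathrm{Inv}(\tau)$. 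A permutation $\sigma$ lies below $\iota(\hat 1)$ iff $\mathrm{Inv}(\sigma) \subseteq \mathrm{Inv}(\iota(\hat 1))$; I would translate this containment back through the relabeling and show it forces exactly the two structural constraints defining $\mathbf{BL}(\mathbf m)$ — namely that $1_1$ precedes $2_1$ precedes $\cdots$ precedes $n_1$, and that within each label class the indices appear in increasing order. The point is that $\mathrm{Inv}(\iota(\hat 1))$ omits precisely the pairs $(i_1, (i+1)_1)$-type and $(i_j, i_{j'})$-type potential inversions, so any $\sigma$ whose inversion set avoids these must respect both constraints, hence be in the image of $\iota$. The main obstacle I anticipate is getting this inversion-set bookkeeping exactly right: carefully describing $\mathrm{Inv}(\iota(\hat 1))$ as a subset of $\binom{[M]}{2}$ and verifying that a permutation avoids a pair there \emph{iff} the corresponding barcode symbols appear in the required relative order — especially handling the interaction between the "labels in order" constraint and the "indices in order" constraint simultaneously. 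Once that is pinned down, the conclusion that $\mathbf{BL}(\mathbf m) \cong [\hat e, \iota(\hat 1)]$ follows, and being a principal order ideal, it is automatically a lattice (as a sublattice, or since every interval in a finite lattice is a lattice), justifying the earlier remark.
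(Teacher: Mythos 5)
Your proposal is correct and follows essentially the same route as the paper: identify the maximal barcode, relabel the symbols via $i_j \mapsto j + \sum_{k<i} m_k$ to embed $\mathbf{BL}(\mathbf m)$ into $S_M$, and show the image is exactly the principal order ideal below that element (your inversion-set bookkeeping is just a more explicit version of the paper's remark that the identification preserves and reflects the common cover relation). Two small slips to fix in the writeup: the top element must list indices \emph{increasingly} within each label, namely $1_1 2_1\cdots n_1\, n_2\cdots n_{m_n}\,(n-1)_2\cdots(n-1)_{m_{n-1}}\cdots 1_2\cdots 1_{m_1}$ (your explicit listing reverses them, contradicting the constraint you state, and drops the final $1$-block), and the non-inversions of this element are \emph{all} pairs $(i_1,k_l)$ with $i<k$ together with the within-label pairs $(i_j,i_{j'})$, not only the consecutive pairs $(i_1,(i+1)_1)$ --- though this larger set is still equivalent to your two structural constraints, so the argument goes through.
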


For ease of notation, define $M_i=(\sum_{k=1}^{i}m_i)$ so that $M_n=M$, then break the set $[M]$ into $n$ blocks:
\begin{align*}
    1,&\dots,M_1 &(B_1)\\
    M_1+1,&\dots,M_2 &(B_2)\\
    M_2+1,&\dots,M_3 &(B_3)\\
    &\vdots&\\
    M_{n-1}+1,&\dots,M_n &(B_n).
\end{align*}
We claim that $\mathbf{BL}(m_1,\dots,m_n)$ is isomorphic to the principal order ideal of $S_M$ generated by the permutation given in one line notation by reading the first entry of each block $B_i$ in increasing order, then reading the remaining entries of $B_n$ in increasing order, then the remaining entries of $B_{n-1}$ in increasing order, and so on, until all of the blocks have been read off.
We denote this permutation as $\beta$.

\begin{proof}
Note that any element $s$ of the principal order ideal $[1^{m_1}\cdots n^{m_n},\beta]$ will have all entries from the same block appearing in numerical order, with the first entry of one block appearing before each entry of subsequent blocks.
Make the identification \[M_i+j\mapsto (i+1)_j,\] so that entries from the same block have the same label, now we see that $s$ is a barcode in $\mathbf{BL}(m_1,\dots,m_n)$.
Additionally, any barcode in $\mathbf{BL}(m_1,\dots,m_n)$ must be less than or equal to the fully reversed barcode $1_1\dots n_1n_2\dots n_{m_n}\dots1_2\dots1_{m_1}$ (in the weak Bruhat order), which is identified with $\beta$.
This identification is order-preserving and order-reflecting, which follows from the fact that the two lattices have the same covering relation.
\end{proof}

This immediately gives us our next two results on the combinatorial barcode lattice:

\begin{proposition}\label{prop:mobius}
 If $\sum_{i=1}^nm_i=M$, then the M\"obius function of $\mathbf{BL}(\mathbf m)$ is a restriction of the M\"obius function of the symmetric group $S_M$ with the weak Bruhat order.
That is, the M\"obius function of $\mathbf{BL}(\mathbf m)$ is
\[\mu(s,t)=\begin{cases}
    (-1)^{|J|}&\text{if }t=sw_0(J)\text{ for some } J\subseteq S_M,\\
    0&\text{otherwise},
\end{cases}\]
where $w_0(J)$ is the top element of the subgroup of $S_M$ generated by $J$.
\end{proposition}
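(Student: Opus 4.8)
The plan is to derive the formula directly from Proposition~\ref{principal ideal} together with the classical description of the M\"obius function of the weak Bruhat order on a symmetric group. The first ingredient is the elementary fact that the M\"obius function of a closed interval $[s,t]$ in a finite poset is determined by the induced suborder on $[s,t]$ alone. By Proposition~\ref{principal ideal}, $\mathbf{BL}(\mathbf m)$ is isomorphic to the principal order ideal $[\,1^{m_1}\cdots n^{m_n},\beta\,]$ of $S_M$ in the weak Bruhat order, and in particular to a down-set of $S_M$; so if $s\le t$ in $\mathbf{BL}(\mathbf m)$ then every $x$ with $s\le x\le t$ also lies in $\mathbf{BL}(\mathbf m)$, and the interval $[s,t]$ is literally the same poset whether formed in $\mathbf{BL}(\mathbf m)$ or in $S_M$. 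Consequently $\mu_{\mathbf{BL}(\mathbf m)}(s,t)$ equals the value of the M\"obius function of $S_M$ (under the weak Bruhat order) on the corresponding interval. I would isolate this as a short remark.

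The second ingredient is Bj\"orner's formula for the M\"obius function of the (right) weak order on a Coxeter group, specialized to $S_M$. Here it is convenient to use that in the weak order the interval $[u,v]$ is order-isomorphic to the lower interval $[e,u^{-1}v]$ via $x\mapsto u^{-1}x$ whenever $u\le v$; this follows from a one-line length count based on the characterization $a\le b\iff\ell(b)=\ell(a)+\ell(a^{-1}b)$. For lower intervals one has $\mu(e,w)=(-1)^{|J|}$ when $w=w_0(J)$ is the longest element of the (standard) parabolic subgroup generated by a set $J$ of adjacent transpositions, and $\mu(e,w)=0$ otherwise. Transporting this back through the isomorphism $[s,t]\cong[e,s^{-1}t]$ gives $\mu(s,t)=(-1)^{|J|}$ exactly when $s^{-1}t=w_0(J)$, i.e.\ $t=s\,w_0(J)$, and $0$ in all other cases; pulling this back along the isomorphism of Proposition~\ref{principal ideal} is the claim.

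I do not expect a real obstacle here. The argument is essentially bookkeeping, and the only point deserving an explicit sentence is the first one --- that passing to an order ideal does not alter closed intervals, so the M\"obius values of $\mathbf{BL}(\mathbf m)$ are inherited verbatim from $S_M$. The one place where more work would be needed is if one wanted a proof independent of Bj\"orner's theorem: then one would have to re-establish the weak-order M\"obius formula for the lower intervals $[e,w]$ of $S_M$ directly, e.g.\ via an EL-labeling of the weak order or a sign-reversing induction. Since Proposition~\ref{prop:mobius} only asserts that $\mu$ is the restriction of the already-known weak-order M\"obius function, invoking that result is all that is required.
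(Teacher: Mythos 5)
Your proposal is correct and follows essentially the same route as the paper: both reduce the claim to the fact that closed intervals of the principal order ideal $[\,1^{m_1}\cdots n^{m_n},\beta\,]\cong\mathbf{BL}(\mathbf m)$ coincide with closed intervals of $S_M$, so the M\"obius function is inherited, and then quote Bj\"orner's formula for the weak order. The paper's proof is just a one-line version of this; your extra care in spelling out the translation $[s,t]\cong[e,s^{-1}t]$ (and in noting that $J$ should range over sets of adjacent transpositions) is a harmless elaboration, not a different argument.
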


\begin{proof}
Since $\mathbf{BL}(\mathbf m)$ is isomorphic to an interval in $S_M$ and the M\"obius function of an interval in a poset is the same as the M\"obius function of the entire poset restricted to the interval, the proposition follows.
\end{proof} 

\begin{corollary}
Let $\mathbf m=(m_1,\dots,m_n)$.
Then $\mathbf{BL}(\mathbf m)$ is distributive if and only if \[\#\{i\mid m_i\geq 2\}\leq 2.\]
\end{corollary}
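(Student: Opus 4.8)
The plan is to combine Proposition \ref{principal ideal} with the classical description of distributivity for lower intervals of the weak order. By Proposition \ref{principal ideal}, $\mathbf{BL}(\mathbf m)$ is isomorphic to the interval $[e,\beta]$ of $S_M$ in the weak Bruhat order, where $\beta$ is the explicit permutation defined there. It is a standard fact that a lower interval $[e,w]$ in the weak order on a symmetric group is a distributive lattice if and only if $w$ is fully commutative, i.e.\ $w$ avoids the pattern $321$. So the corollary reduces to deciding for which $\mathbf m$ the permutation $\beta$ avoids $321$.

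If a proof of this weak-order characterization is wanted, I would sketch both directions. When $w$ avoids $321$ it is fully commutative, its reduced words form a single commutation class, and $[e,w]$ is isomorphic to the lattice of order ideals of the heap poset of $w$, hence distributive. When $w$ contains a $321$ pattern, some reduced word of $w$ has a braid subword $s_is_{i+1}s_i$, so $[e,w]$ contains a sub-interval isomorphic to the weak order on the parabolic subgroup $\langle s_i,s_{i+1}\rangle\cong S_3$, i.e.\ the six-element ``hexagon''. The hexagon is not distributive (it contains a pentagon sublattice), and every interval of a distributive lattice is distributive, so $[e,w]$ is not distributive.

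It then remains to determine when $\beta$ avoids $321$, equivalently when its longest strictly decreasing subsequence has length at most $2$. From the description of $\beta$ in Proposition \ref{principal ideal}, $\beta$ is the concatenation of the increasing list of block-minima $1,M_1+1,\dots,M_{n-1}+1$, followed successively by the increasing tails of $B_n,B_{n-1},\dots,B_1$ (skipping empty tails). Two structural observations drive the argument: (i) the block-minima together with the tail of $B_n$ form a single increasing run, since the smallest element $M_{n-1}+2$ of the $B_n$-tail exceeds the last block-minimum $M_{n-1}+1$; and (ii) the tails of $B_{n-1},B_{n-2},\dots$ have pairwise disjoint, strictly decreasing value ranges, each lying below the maximum of the initial run, so every nonempty such tail forms its own maximal increasing run of $\beta$. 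Hence the maximal increasing runs of $\beta$ are one ``initial'' run together with one run for each $j<n$ with $m_j\ge 2$. Since a decreasing subsequence meets each increasing run at most once, and since one can realize this bound by selecting a suitable element from each relevant run in turn, the length of the longest decreasing subsequence of $\beta$ is governed by the number of blocks of size at least $2$; comparing with the threshold $2$ yields exactly the condition $\#\{i\mid m_i\ge 2\}\le 2$, and the corollary follows.

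The step I expect to be the main obstacle is this last computation: identifying the maximal increasing runs of $\beta$ and pinning down the exact length of its longest decreasing subsequence, where the behaviour at the top block $B_n$ (whose tail is absorbed into the initial run) requires the most care. By contrast, the weak-order characterization used above is standard; of its two halves, the embedded-hexagon argument for ``only if'' is the conceptual heart but is routine once the parabolic sub-interval is located.
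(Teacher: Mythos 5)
Your overall route is the same as the paper's: embed $\mathbf{BL}(\mathbf m)$ as the interval $[e,\beta]$ via Proposition \ref{principal ideal}, invoke the standard fact that a lower weak-order interval is distributive iff its top element is $321$-avoiding (fully commutative), and then compute the longest decreasing subsequence of $\beta$. Your run analysis of $\beta$ is correct and in fact sharper than the paper's: the maximal increasing runs are the initial run (block minima followed by the tail of $B_n$) plus one run for each $j<n$ with $m_j\ge 2$, and a decreasing subsequence hitting each run once exists (take the last element of the initial run, namely $M$, then the maximum of each successive nonempty tail). So the longest decreasing subsequence of $\beta$ has length exactly $1+\#\{j<n\mid m_j\ge 2\}$.

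The gap is in your very last step, where you assert that comparing this with the threshold $2$ ``yields exactly the condition $\#\{i\mid m_i\ge 2\}\le 2$.'' It does not: your own count gives the criterion $\#\{j<n\mid m_j\ge 2\}\le 1$, which agrees with the stated condition when $m_n\ge 2$ but disagrees when $m_n=1$ and exactly two earlier multiplicities are at least $2$. For instance, for $\mathbf m=(2,2,1)$ one has $\beta=13542$, which contains the decreasing subsequence $5,4,2$, so $[e,\beta]$ is not distributive even though $\#\{i\mid m_i\ge 2\}=2$; one can check directly in $\mathbf{BL}(2,2,1)$ that the rank-$3$ elements $12231$ and $12312$ have join $12321$ of rank $4$ and meet $12123$ of rank $1$, violating rank-modularity. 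So the corollary as stated is actually false in this edge case, and your correct intermediate analysis, if followed through, would have exposed this. (The paper's own proof hides the same issue by asserting that $n_2,(n-1)_2,\dots,1_2$ is a \emph{maximum}-length decreasing subsequence of $\beta$; when $m_n=1$ this fails because $n_1$ can be prepended.) The statement and both proofs become correct either under the added hypothesis $m_n\ge 2$, or with the condition replaced by $\#\{i<n\mid m_i\ge 2\}\le 1$.
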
 

\begin{proof}
It is known that a the principal order ideal $[e,w]\subseteq S_n$ in the weak Bruhat order is a distributive lattice if and only if $w$ is 321-avoiding.
For more results on pattern avoidance and intervals in the Bruhat order, see \cite{TennerIntervals}.
Equivalently, $[e,w]$ is a distributive lattice if and only if the length of the longest decreasing subsequence of $w$ is less than three.
Taking $S_n$ to act on the set $\{1_1,\dots,1_{m_n},\dots,n_1,\dots,n_{m_n}\}$, we write $\beta$ as
\begin{align*}
    1_12_1&\cdots n_1\\
    n_2&\cdots n_{m_n}\\
    &\vdots\\
    2_2&\cdots2_{m_2}\\
    1_2&\cdots1_{m_1}.
\end{align*}
Then, it is discerned that a maximum-length decreasing subsequence of $\beta$ is
\[n_2,(n-1)_2,\dots,2_2,1_2.\]
The length of this subsequence is less than three if and only if there are fewer than three $i$ such that $m_i\geq2$.
\end{proof}

For our first entirely enumerative result, we will determine the number of join-irreducible elements of the combinatorial barcode lattice:

\begin{theorem}\label{thm:join_irred}
For $\mathbf m=(m_1,\dots,m_n)$, the number of join-irreducible elements in $\mathbf{BL}(\mathbf m)$ is
\[\prod_{i=1}^n(m_i+1)-(m_1+m_2+\dots+m_n+1)-\sum_{i=1}^{n-1}\left[\left(\prod_{j=1}^{i-1}m_j\right)\left(\biggl(\prod_{j=i+1}^nm_j+1\biggr)-1\right)\right].\]
\end{theorem}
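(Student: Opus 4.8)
The plan is to reduce to counting barcodes with exactly one descent, then count those via a bijection with a family of sub-multisets of $\{1^{m_1},\dots,n^{m_n}\}$. Recall that in a finite lattice an element is join-irreducible exactly when it covers a single element. By Proposition~\ref{principal ideal}, $\mathbf{BL}(\mathbf m)$ is isomorphic to an order ideal of $S_M$ in the weak Bruhat order, and since order ideals are downward closed, the elements covered by a barcode $s$ inside $\mathbf{BL}(\mathbf m)$ are exactly those it covers in $S_M$: the barcodes obtained by picking a position $p$ at which the label in position $p$ of $s$ exceeds the label in position $p+1$ --- a \emph{descent} of $s$ --- and transposing those two entries. (Two adjacent entries of equal label never form a descent, since within each label the indices increase.) Hence the join-irreducibles of $\mathbf{BL}(\mathbf m)$ are precisely the barcodes with exactly one descent.

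To enumerate these, to each sub-multiset $A\subseteq\{1^{m_1},\dots,n^{m_n}\}$ --- with multiplicities $a_i$ and complement $B$ of multiplicities $m_i-a_i$ --- associate the word $s_A$ that lists the labels of $A$ in weakly increasing order, followed by the labels of $B$ in weakly increasing order. Every barcode $s$ with its unique descent at position $p$ has $s_1\le\cdots\le s_p$ and $s_{p+1}\le\cdots\le s_M$, so $s=s_A$ for $A$ the multiset of its first $p$ labels, and $A$ is then recovered from $s_A$ as the labels preceding its descent; thus counting join-irreducibles reduces to counting the $A$ for which $s_A$ is a barcode with exactly one descent. Since each of the two blocks of $s_A$ is weakly increasing, the only possible descent of $s_A$ is at the junction between them, so $s_A$ has exactly one descent iff $A,B\ne\emptyset$ and $\max A>\min B$; and $s_A$ obeys the barcode condition iff the support $\{i:a_i>0\}$ is an initial segment $\{1,\dots,k\}$ of $[n]$, since all entries of $A$ precede all of $B$ and first occurrences of labels increase within each block, so the condition fails exactly when some label is absent from $A$ while a larger one is present. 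Combining, the join-irreducibles correspond bijectively to the sub-multisets $A$ with $\{i:a_i>0\}=\{1,\dots,k\}$ for some $k$ and $\max A>\min B$; given the first condition the second is equivalent to $a_j<m_j$ for some $j<k$ (which forces $k\ge2$ and rules out both $A=\emptyset$ and $A=\{1^{m_1},\dots,n^{m_n}\}$).

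It remains to count these $A$ by complementation, among the $\prod_{i=1}^n(m_i+1)$ sub-multisets in total. The $A$ whose support is not an initial segment of $[n]$ can be grouped by the least index $i$ with $a_i=0$: then $i\le n-1$, the multiplicities $a_1,\dots,a_{i-1}$ range over $\{1,\dots,m_j\}$, and $(a_{i+1},\dots,a_n)$ is any tuple other than the all-zero one, so these contribute $\sum_{i=1}^{n-1}\left(\prod_{j=1}^{i-1}m_j\right)\left(\prod_{j=i+1}^n(m_j+1)-1\right)$. The $A$ whose support is an initial segment $\{1,\dots,k\}$ (for some $0\le k\le n$) but with $a_j=m_j$ for all $j<k$ are exactly the $A$ for which $s_A$ is the bottom barcode $1^{m_1}2^{m_2}\cdots n^{m_n}$, and there are $1+\sum_{k=1}^n m_k=M+1$ of them ($A=\emptyset$, together with the $m_k$ choices of the single free multiplicity $a_k$ for each $k\in[n]$). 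Subtracting these two disjoint families from $\prod_{i=1}^n(m_i+1)$ gives the asserted formula; for $\mathbf m=(2,2,2)$ it evaluates to $27-7-12=8$, matching Figure~\ref{BL 2,2,2}. (Enumerating the valid $A$ directly by $k=\max\{i:a_i>0\}$ instead gives the equivalent form $\sum_{k=1}^n\prod_{i=1}^k m_i-\sum_{i=1}^n m_i$, via a short telescoping.)

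The genuinely careful step is the middle paragraph: verifying that the barcode condition is equivalent to the support being an initial segment, and that the extra descent of $s_A$ vanishes precisely when $s_A$ collapses to the bottom element, all while correctly handling the boundary cases $k=1$, $k=n$, and $A$ or $B$ empty. Once that is in place, the enumeration is routine bookkeeping.
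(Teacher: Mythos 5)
Your proof is correct and follows essentially the same route as the paper: identify the join-irreducibles with the single-descent barcodes, encode each by the multiset of labels preceding its descent, and subtract from $\prod_i(m_i+1)$ the same two disjoint bad families (your ``bottom-element'' family of size $M+1$ is exactly the paper's descent-free vectors $(m_1,\dots,m_i,x_{i+1},0,\dots,0)$ together with the zero vector, and your non-initial-segment family is the paper's second subtraction). The only real difference is presentational: you justify ``join-irreducible $\Leftrightarrow$ exactly one descent'' via the covers-exactly-one-element criterion inside the order ideal, which is self-contained and slightly tighter than the paper's explicit join decomposition plus its appeal to Bennett--Birkhoff.
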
 

\begin{figure}[h]
\begin{center}
\begin{tikzpicture}[scale=0.9][node distance = 12mm and 12mm,every edge quotes/.style = {auto, font=\footnotesize, sloped}]

\node (1223113) at (0,0) {1223113};
\node[below right of=1223113,gray] (1221313) {1221313};

\node[above right of=1221313, gray] (1221331) {1221331};
\node[above of=1223113,gray] (1232113) {1232113};
\node[above of=1221331,gray] (1223131) {1223131};
\node[below left of=1221313,gray] (1212313) {1212313};
\node[below of=1212313] (blankBR) {\quad};
\node[below right of=1221313] (1221133) {1221133};
\node[below of=1212313] (blankBL) {\quad};
\node[below of=1221133,gray] (blankBR) {1212133};
\node[above of=1232113] (blankTL) {\quad};
\node[above of=1223131] (blankTR) {\quad};
\draw[blue,thick] (1223113) -- (1221313);
\draw[gray,dashed] (1212313) -- (1221313);
\draw[gray,dashed] (1221133) -- (1221313);
\draw[gray,dashed] (1223113) -- (1232113);
\draw[gray,dashed] (1223113) -- (1223131);
\draw[gray,dashed] (1232113) -- (blankTL);
\draw[gray,dashed] (1223131) -- (blankTR);
\draw[gray,dashed] (1212313) -- (blankBL);
\draw[blue,thick] (1221133) -- (blankBR);

\draw[gray,dashed] (1221313) -- (1221331);
\draw[gray,dashed] (1221331) -- (1223131);
\node[below right of=1221331] (anotherBlank) {\quad};
\draw[gray,dashed] (1221331) -- (anotherBlank);
\node[below left of=1212313] (lastBlank) {\quad};
\draw[gray,dashed] (1212313) -- (lastBlank);
\node[above right of=1223131] (nevermind) {\quad};
\draw[gray,dashed] (1223131) -- (nevermind);
\node[below left of=1232113] (oneMoreThing) {\quad};
\draw[gray,dashed] (1232113) -- (oneMoreThing);

\end{tikzpicture}
\caption{A depiction showing that 1223113 and 1221133 are two join-irreducible elements of $\mathbf{BL}(3,2,2)$.}
\label{ji example}
\end{center}
\end{figure}
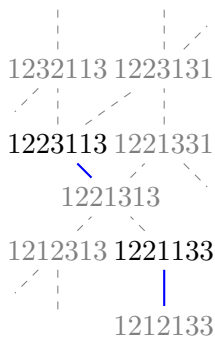

\begin{proof}
Recall that the multinomial Newman lattice $L(\mathbf m)$ defined in \cite{bennettBirkhoffNewmanLattices} is the lattice of strings containing $m_i$ copies of $i$ for each $i\in[n]$, where the covering relation is defined by $s\prec t$ if and only if $s$ and $t$ differ exactly by a transposition of adjacent entries that appear in order in $s$ and reversed in $t$.
Bennett and Birkhoff prove in \cite{bennettBirkhoffNewmanLattices} that the number of join-irreducible elements in $L(\mathbf m)$ is $\prod_{i=1}^n(m_i+1)-(m_1+m_2+\dots+m_n+1)$.
Note that by a similar argument to Proposition \ref{principal ideal}, we have that $\mathbf{BL}(\mathbf m)$ is also a principal order ideal of $L(\mathbf m)$ which is itself a principal order ideal of $S_M$.
We proceed by recounting Bennett and Birkhoff's proof and then subtracting off the extra join-irreducibles that are present in $L(\mathbf m)$, but not in $\mathbf{BL}(\mathbf m)$.

Observe that each join-irreducible element consists of a weakly increasing string followed by a unique descent into another increasing string.
(See Figure \ref{ji example} for an example in $\mathbf{BL}(3,2,2)$.)
This is because if there are two descents in a string \[S=r^\smallfrown ba^\smallfrown s^\smallfrown dc^\smallfrown t,\] where $r$, $s$, and $t$ are strings, $a,b,c$ and $d$ are elements of the ground set such that $a<b$ and $c<d$, and $^\smallfrown$ denotes concatenation of strings, then we can express $S$ as
\[S=r^\smallfrown ab^\smallfrown s^\smallfrown dc^\smallfrown t\vee r^\smallfrown ba^\smallfrown s^\smallfrown cd^\smallfrown t.\]
Since a string with a unique descent consists of a weakly increasing substring before the descent and a weakly increasing substring after the descent, we can uniquely specify a join-irreducible element by how many entries with each label come before the adjacent inversion.
Thus, we can represent a join-irreducible element of $L(\mathbf m)$ by the vector $\mathbf x=(x_1,\dots,x_n)$, where $x_i$ is the number of entries $i_j$ appearing before the descent.
There are $\prod_{i=1}^n(m_i)$ vectors $(x_1,\dots,x_n)$ with $0\leq x_i\leq m_i$ for each $i$.

We must now subtract off all vectors that do not correspond to a join-irreducible element.
We first subtract all vectors of the form $(m_1,\dots,m_i,x_{i+1},0,\dots,0)$, since such a vector corresponds to a string having initial segment $1^{m_1}2^{m^2}\cdots i^{m_i}(i+1)^{x_{i+1}}k$ with $k<i+1$.
This cannot be, since each $k$ with $k<i+1$ appears before the first occurrence of $i+1$.
We also subtract the zero vector because a minimal element of a lattice does not count as a join-irreducible element.
This leaves us with $\prod_{i=1}^n(m_i+1)-(m_1+m_2+\dots+m_n+1)$ join-irreducible elements in $L(\mathbf m)$.

We continue by subtracting the vectors that are join-irreducible elements of $L(\mathbf m)$, but are not join-irreducible elements of $\mathbf{BL}(\mathbf m)$.
These vectors correspond to strings $s$ containing entries $i_1$ and $j_1$ with $i<j$ and $j_1$ appearing before $i_1$ in $s$.
Since we are only considering strings that have a unique descent, we know that $j_1$ must appear before the descent and $i_1$ must appear after the descent.
If $j_1$ appears before the descent, we know that the vector $\mathbf x$ corresponding to this element must have $x_j>0$, and if $i_1$ appears after the descent, we must have $x_i=0$.
Thus, a vector $(x_1,\dots,x_n)$ corresponding to a join-irreducible element of $L(\mathbf m)$ is also a join-irreducible element of $\mathbf{BL}(\mathbf m)$ if whenever $x_i$ is positive for some $i$, $x_j$ is positive for all $j<i$.

Now, we count all vectors having $x_i=0$ and $x_j>0$ for some $i<j$.
Let $i$ be the minimal such that $x_i=0$.
Then, for each $j<i$, $x_j$ can be anything greater than $0$, and for each $k>i$, $x_k$ can be anything as long as there is at least one $k>i$ with $x_k>0$.
Taking the sum over all $i$ less than $n$ gives us
\[\sum_{i=1}^{n-1}\left[\left(\prod_{j=1}^{i-1}m_j\right)\left(\biggl(\prod_{j=i+1}^nm_j+1\biggr)-1\right)\right]\]
vectors to exclude.
This gives us a total of
\[\prod_{i=1}^n(m_i+1)-(m_1+m_2+\dots+m_n+1)-\sum_{i=1}^{n-1}\left[\left(\prod_{j=1}^{i-1}m_j\right)\left(\biggl(\prod_{j=i+1}^nm_j+1\biggr)-1\right)\right]\]
join-irreducible elements in $\mathbf{BL}(\mathbf m)$, as was to be proved.
\end{proof}

\noindent To see the number of join-irreducible elements in some of the power-$k$ barcode lattices, see Table \ref{JI Table}.

\begin{center}
\begin{table}[h]
\begin{tabular}{|c|c||c|c||c|c|}
    \hline
    $\quad$ & Number of & $\quad$ & Number of & $\quad$ & Number of\\
    $(c^n)$ & join-irreducible&$(c^n)$ & join-irreducible&$(c^n)$ & join-irreducible\\
    $\quad$ & Elements in $\mathbf{BL}(c^n)$&$\quad$ & Elements in $\mathbf{BL}(c^n)$&$\quad$ & Elements in $\mathbf{BL}(c^n)$\\
    \hline\hline
    $2^2$ & 2&$3^2$&6&$5^2$&20\\
    \hline
    $2^3$ & 8&$3^3$&30&$5^3$&140\\
    \hline
    $2^4$ & 22&$3^4$&108&$5^4$&760\\
    \hline
    $2^5$ & 52&$3^5$&348&$5^5$&3880\\
    \hline
    $2^6$ & 114&$3^6$&1074&$5^6$&19500\\
    \hline
    \hline
    $9^2$ & 72 &$17^2$&272&$33^2$&1056\\
    \hline
    $9^3$ & 792 &$17^3$&5168&$33^3$&36960\\
    \hline
    $9^4$ & 7344 &$17^4$&88672&$33^4$&1222848\\
    \hline
    $9^5$ & 66384 &$17^5$&1508512&$33^5$&40358208\\
    \hline
    $9^6$ & 597816 &$17^6$&25646064&$33^6$&1331826144\\
    \hline
    
\end{tabular}
\caption{Numbers of join-irreducible elements in some power-$k$ barcode lattices.}
\label{JI Table}
\end{table}
\end{center}

Now, we observe that the multinomial Newman lattice and the combinatorial barcode lattice are both examples of a more general class of structures.

\begin{proposition}
Denote by $\mathbf{\hat{n}}$ the chain $1<2<\dots<n-1<n$.
Then, for $\mathbf{m}=(m_1,\dots,m_n)$, let $P(\mathbf m)$ denote the poset consisting of the $n+1$ chains $\mathbf{\hat{m}_1},\dots,\mathbf{\hat{m}_n}, \mathbf{\hat{n}}$, where for each $i$ the minimum element of $\mathbf{\hat{m}_i}$ is identified with $i$ in the chain $\mathbf{\hat{n}}$.
The combinatorial barcode lattice $\mathbf{BL}(\mathbf m)$ is a partial order on the set of linear extensions of the poset $P(\mathbf m)$.
\end{proposition}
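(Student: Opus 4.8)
The plan is to identify the underlying set of $\mathbf{BL}(\mathbf m)$, word for word, with the set of linear extensions of $P(\mathbf m)$; once this is done the partial order of $\mathbf{BL}(\mathbf m)$ transports across the identification and there is nothing further to prove.

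First I would describe the linear extensions of $P(\mathbf m)$ concretely. The ground set of $P(\mathbf m)$ is $\{i_j : i\in[n],\ 1\le j\le m_i\}$, where $i_1$ simultaneously denotes the minimum of the chain $\mathbf{\hat{m}_i}$ and the $i$-th element of the chain $\mathbf{\hat{n}}$, and the order of $P(\mathbf m)$ is the transitive closure of the two families of cover relations $i_j\lessdot i_{j+1}$ (coming from $\mathbf{\hat{m}_i}$) and $i_1\lessdot (i+1)_1$ (coming from $\mathbf{\hat{n}}$). A short check shows that gluing the $n+1$ chains along the shared elements $i_1$ neither creates new covers nor destroys old ones, so these are all the covers of $P(\mathbf m)$. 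Hence a word $w=w_1\cdots w_M$ listing every element of $P(\mathbf m)$ exactly once ($M=\sum_i m_i$) is a linear extension if and only if it refines both families of covers, that is, if and only if (a) for each $i$ the entries $i_1,i_2,\dots,i_{m_i}$ occur in $w$ in that relative order, and (b) the entries $1_1,2_1,\dots,n_1$ occur in $w$ in that relative order.

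Next I would invoke the reformulation of $\mathbf{BL}(\mathbf m)$ recorded in Section~\ref{sec:background}: its elements are exactly the permutations of the totally ordered ground set $1_1<\cdots<1_{m_1}<2_1<\cdots<n_{m_n}$ that satisfy precisely conditions (a) and (b). Comparing the two descriptions, the identity map on words is a bijection between the underlying set of $\mathbf{BL}(\mathbf m)$ and the set $\mathcal{L}(P(\mathbf m))$ of linear extensions of $P(\mathbf m)$; transporting the order of $\mathbf{BL}(\mathbf m)$ through this bijection exhibits $\mathbf{BL}(\mathbf m)$ as a partial order on $\mathcal{L}(P(\mathbf m))$, which is the assertion. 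To make the connection with Bj\"orner and Wachs \cite{bjornerWachsLinearExtensions} explicit, I would label $P(\mathbf m)$ by setting $\omega(i_j)$ equal to the position of $i_j$ in the total order $1_1<\cdots<n_{m_n}$; this $\omega$ is order-preserving, so $(P(\mathbf m),\omega)$ is a naturally labeled poset, reading $\omega$-labels along linear extensions identifies $\mathcal{L}(P(\mathbf m))$ with the Jordan-H\"older set of $(P(\mathbf m),\omega)$ inside $S_M$, and the cover relation of $\mathbf{BL}(\mathbf m)$ --- swapping two adjacent distinct-label entries that are increasing in the ground order --- corresponds to a covering step of the weak order on $S_M$. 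This is consistent with Proposition~\ref{principal ideal}, which identifies that Jordan-H\"older set with the interval $[e,\beta]$.

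I do not expect a genuine obstacle; the only care needed is bookkeeping around the elements $i_1$, which lie in both $\mathbf{\hat{m}_i}$ and $\mathbf{\hat{n}}$, when confirming that the covers of $P(\mathbf m)$ are exactly the two families above, and --- for the remark on Bj\"orner and Wachs --- pinning down conventions (right versus left weak order, and the orientation of $\omega$) so that the identifications with the weak order and with $\beta$ hold exactly rather than up to a convention.
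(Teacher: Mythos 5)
Your proposal is correct and follows essentially the same route as the paper: the paper likewise writes down the order of $P(\mathbf m)$ explicitly on the ground set $\{i_j\}$ (as the transitive closure of the two families of chain relations you list) and observes that the linear extensions are precisely the words satisfying the two defining conditions of a barcode, so that the identity map gives the desired identification. Your closing remark on the labeling $\omega$ and the Jordan--H\"older set matches the discussion the paper gives immediately after the proof.
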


\begin{proof}
We obtain the poset $P(\mathbf m)$ by partially ordering the set \[S=\{1_1,\dots,1_{m_1},\dots,n_1,\dots,n_{m_n}\}\] with the order $\prec$ defined by
\[a_i\prec b_j\Leftrightarrow(a\leq b \wedge i=1)\vee(i\leq j\wedge a=b).\]
Then we see that the permutation of the labels of $S$ obtained from a linear order on $S$ by listing 
 the elements of $S$ in order corresponds to a barcode if and only if the linear order extends $\prec$.   
\end{proof}

Note that in general, if we have a poset $P$ with $|P|=n$ and a (bijective) labelling $\omega:P\rightarrow [n]$ of $P$ by elements of $[n]$, then the linear extensions of $P$ can be thought of as elements of the symmetric group $S_n$ by identifying the linear extension $x_1<x_2<\cdots<x_n$ with the permutation $\omega(x_1)\omega(x_2)\cdots\omega(x_n)$.
We can then consider the induced subposet of the weakly ordered symmetric group consisting of these elements.
We call the resulting poset $\mathcal L(P,\omega)$.
\begin{itemize}
    \item We can see that for the poset $P(\mathbf m)$ defined in the previous proof and taking $\omega$ to be the labelling that sends $i_j$ to $j+\sum_{1\leq k<i}m_i$, then $\mathcal L(P(\mathbf m),\omega)$ is the combinatorial barcode lattice.

    \item If the base poset is instead the antichain $A$ on $[n]$ and $\omega$ is any labelling, then $\mathcal L(A,\omega)$ is the entire symmetric group.

    \item If the base poset is $Q(\mathbf m)$ whose underlying set is $\{i_j\mid j\in[m_i]\}$ and whose order is defined by $i_j\leq k_l \Leftrightarrow i=k\wedge j\leq l$, and $\omega$ is the labelling $\omega(i_j)=j+\sum_{1\leq k<i}m_i$ then $\mathcal{L}(Q(\mathbf m),\omega)$ is the multinomial Newman lattice $L(\mathbf m)$.
\end{itemize}
The (unordered) ground set of $\mathcal L(P,\omega)$ is called the Jordan-H\"older set of $P$.
There is some literature on the set of linear extensions of a labeled poset under the weak Bruhat order, namely Bj\"orner and Wach's paper\cite{bjornerWachsLinearExtensions}.
This paper focuses mainly on the posets $\mathcal L(P,\omega)$ that satisfy
\[\sum_{\sigma\in\mathcal L(P,\omega)}q^{\mathrm{inv}(\sigma)}=\sum_{\sigma\in\mathcal L(P,\omega)}q^{\mathrm{maj}(\sigma)},\]
and proves that equality holds only when the ground poset $P$ is a forest (i.e., every element of $P$ is covered by at most one element) and $w$ is a postorder labeling.
The poset $P$ for which $\mathcal L(P,\omega)=\mathbf{BL}(\mathbf m)$, where $\mathbf{m}=(m_1,m_2,\dots,m_n)$ is only a forest when $m_i=1$ for all $i<n$.
If $\mathbf m$ is of this form, however, $\mathbf{BL}(\mathbf m)$ only contains a single element.
We can conclude that in all cases where $\mathbf{BL}(\mathbf m)$ has more than one element
\[\sum_{\sigma\in\mathbf{BL}(\mathbf m)}q^{\mathrm{inv}(\sigma)}\neq\sum_{\sigma\in\mathbf{BL}(\mathbf m)}q^{\mathrm{maj}(\sigma)}.\]
We realize that the left hand side of the equation above is the rank-generating function of $\mathbf{BL}(\mathbf m)$.
To see this, notice that for $s,t\in\mathbf{BL}(\mathbf m)$ we have that $t$ covers $s$ if and only if a pair of increasing adjacent elements of $s$ can be swapped to obtain $t$, meaning $\mathrm{inv}(t)=\mathrm{inv}(s)+1$.
Thus, if we have $\mathrm{rk}(t)=r$ then we have a sequence
\[\id=s_0\lessdot s_1\lessdot\dots\lessdot s_r=t,\]
where $\id$ is the identity permutation with $\mathrm{rk}(\id)=\mathrm{inv}(\id)=0$.
Thus,
\[\mathrm{inv}(t)=\mathrm{inv}(s_r)=\mathrm{inv}(s_{r-1})+1=\dots=\mathrm{inv}(\id)+r=r=\mathrm{rk}(t).\]
In what follows, we give an explicit formula for the rank-generating function of $\mathbf{BL}(\mathbf m)$.


\section{The rank-generating function of  \texorpdfstring{$\mathbf{BL}(\mathbf{m})$}{\textbf{BL}(\textbf{m})}}\label{sec:rank_gen_func}

To obtain the rank-generating function of $\mathbf{BL}(\mathbf m)$, we will define a ``simpler" lattice whose rank-generating function is clear and show that this lattice has the same rank-generating function as $\mathbf{BL}(\mathbf m)$.
Let $V(\mathbf{m})$ be the set of vectors $\langle x_{1_1},\dots,x_{1_{m_1}},\dots,x_{n_1},\dots,x_{n_{m_n}}\rangle$ such that for all $i,j$,
\[0=x_{i_1}\leq x_{i_j}\leq x_{i_{j+1}}\leq\sum_{k=i+1}^nm_k.\]
We partially order $V(\mathbf m)$ by declaring that $v\lessdot v'$ if and only if $v$ and $v'$ differ at exactly one entry, which is one larger in $v'$ than in $v$.
(See Figure \ref{V(4,3)} for the Hasse diagram of $V(4,3)$.)

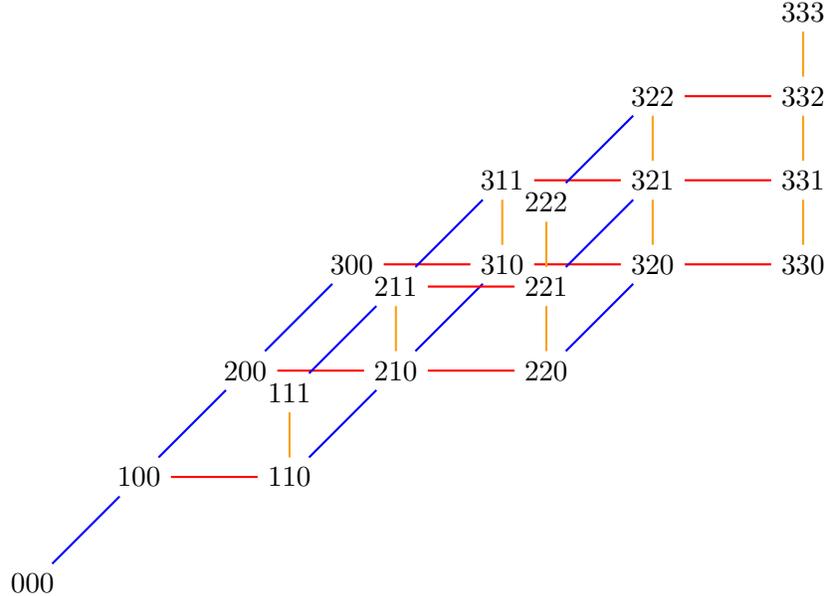
\begin{figure}[h]
\begin{center}
\begin{tikzpicture}[node distance = 20mm and 20mm,every edge quotes/.style = {auto, font=\footnotesize, sloped}]
    \node (000) at (0,0) {000};

    \node [above right of=000] (100) {100};
    \node [above right of=100] (200) {200};
    \node [above right of=200] (300) {300};
    \draw [blue, thick] (000) -- (100);
    \draw [blue, thick] (100) -- (200);
    \draw [blue, thick] (200) -- (300);

    \node [right of=100] (110) {110};
    \node [above right of=110] (210) {210};
    \node [above right of=210] (310) {310};
    \draw [blue, thick] (110) -- (210);
    \draw [blue, thick] (210) -- (310);

    \draw[red,thick] (100) -- (110);
    \draw[red,thick] (200) -- (210);
    \draw[red,thick] (300) -- (310);

    \node [right of=210] (220) {220};
    \node [above right of=220] (320) {320};
    \draw [blue, thick] (220) -- (320);

    \draw[red,thick] (210) -- (220);
    \draw[red,thick] (310) -- (320);

    \node [right of=320] (330) {330};
    
    \draw[red,thick] (320) -- (330);

    \node[above=6mm of 110] (111) {111};
    \node [above right of=111] (211) {211};
    \node [above right of=211] (311) {311};
    \draw [blue, thick] (111) -- (211);
    \draw [blue, thick] (211) -- (311);

    \draw[YellowOrange, thick] (110) -- (111);
    \draw[YellowOrange,thick] (210) -- (211);
    \draw[YellowOrange,thick] (310) -- (311);

    \node [right of=211] (221) {221};
    \node [above right of=221] (321) {321};
    \draw [blue, thick] (221) -- (321);

    \draw[red,thick] (211) -- (221);
    \draw[red,thick] (311) -- (321);

    \draw[YellowOrange,thick] (220) -- (221);
    \draw[YellowOrange,thick] (320) -- (321);

    \node [right of=321] (331) {331};
    
    \draw[red,thick] (321) -- (331);
    \draw[YellowOrange,thick] (330) -- (331);
    
    \node[above=6mm of 221] (222) {222};
    \draw[YellowOrange,thick] (221) -- (222);

    \node [above right of=222] (322) {322};
    \draw [blue, thick] (222) -- (322);

    \draw[YellowOrange,thick] (221) -- (222);
    \draw[YellowOrange,thick] (321) -- (322);

    \node [right of=322] (332) {332};
    
    \draw[red,thick] (322) -- (332);
    \draw[YellowOrange,thick] (331) -- (332);

    \node[above=6mm of 332] (333) {333};
    \draw[YellowOrange,thick] (332) -- (333);

\end{tikzpicture}
\caption{The Hasse diagram of $V(4,3)$.}
\label{V(4,3)}
\end{center}
\end{figure}

Now, we note that an element $s\in\mathbf{BL}(\mathbf{m})$ can be specified uniquely by listing the values of $\Phi_s(i_j)$ for each entry $i_j$ in $s$.
We call the vector $\langle\Phi_s(1_1),\dots,\Phi_s(n_{m_n}) \rangle$ the inversion vector of $s$.

\begin{example} Let us construct the barcode $s\in\mathbf{BL}(4,3,3,3)$ with inversion vector
\[\langle0,0,3,7;0,1,4;0,1,3;0,0,0\rangle.\]
We start by listing the subsequence consisting of all entries with index $1$, since these must appear in order:
\[1_1 2_1 3_1 4_1.\]
We then insert all of the entries that have the largest label, which in this case is $4$.
These must appear at the very end since the subsequence $4_14_24_3$ must appear in order:
\[1_1 2_1 3_1 4_1 4_2 4_3.\]
Next, we insert all of the entries with label $3$.
Observe that $3_2$ must appear after $4_1$ so that we have one entry (namely, $4_1$) between $3_1$ and $3_2$ whose label is greater than $3$.
Similarly, $3_3$ must appear after $4_1, 4_2$, and $4_3$:
\[1_1 2_1 3_1 4_1 3_2 4_2 4_3 3_3.\]
We insert $2_2$ after $3_1$, and we insert $2_3$ after $3_1,4_1,3_2$, and $4_2$:
\[1_1 2_1 3_1 2_2 4_1 3_2 4_2 2_3 4_3 3_3.\]
Finally, we insert the $1$s.
Observe that $1_2$ gets inserted directly after $1_1$, $1_3$ is inserted after $2_1,3_1$ and $2_2$, and $1_4$ is inserted after $2_1,3_1,2_2,4_1,3_2,4_2$, and $2_3$, giving us
\[s=1_1 1_2 2_1 3_1 2_2 1_3 4_1 3_2 4_2 2_3 1_4 4_3 3_3. \]
\hfill $\blacksquare$
\end{example}

Going from the permutation to inversion vector requires us to simply count, for each $i_j$, how many entries to the left of $i_j$ have labels greater than $i$.
This gives us a bijection $f$ between $\mathbf{BL}(\mathbf m)$ and the corresponding set of inversion vectors, which we claim is $V(\mathbf m)$.
To see why, notice that we get the bound
\[0=x_{i_1}\leq x_{i_j}\leq x_{i_{j+1}}\leq\sum_{k=i+1}^nm_k,\]
for all $i$ and $j$, from the fact that in an element $s\in\mathbf{BL}(\mathbf m)$, each entry with label $i$ other than $i_1$ can be placed to the right of at most $\sum_{k=i+1}^nm_k$ entries with label greater than $i$.

Further note that $f:\mathbf{BL}(\mathbf m)\rightarrow V(\mathbf m)$ is not an isomorphism, but does preserve the number elements of each rank.
This is because the rank of a barcode $s\in\mathbf{BL}(\mathbf m)$ is the number of inversions in $s$, i.e., 
\[\mathrm{rk}(s)=\sum_{i_j\in s}\Phi_s(i_j),\]
which is exactly the sum of the entries in $f(s)$, equivalently, the rank of $f(s)$ in $V(\mathbf m)$.
This tells us that $\mathbf{BL}(\mathbf m)$ and $V(\mathbf m)$ have the same rank-generating function.

\begin{theorem}\label{thm:rank_gen_func}
If $\mathbf m=(m_1,\dots,m_n)$, the rank-generating function of $V(\mathbf m)$, and thus of $\mathbf{BL}(\mathbf m)$, is
\[\prod_{i=1}^{n}\sqbinom{(\sum_{j=i}^nm_j)-1}{m_i-1}_q,\]
where $\sqbinom n k_q$ is the standard $q$-analog of the binomial coefficient:
\[\sqbinom nk_q= \frac {[n]_q!}{[n-k]_q![k]_q!}.\]
\end{theorem}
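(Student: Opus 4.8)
The plan is to exploit the fact that the defining inequalities of $V(\mathbf m)$ only couple coordinates that share a common label, so that $V(\mathbf m)$ splits as a direct product of smaller posets and its rank-generating function factors accordingly. For each $i\in[n]$ put $N_i=\sum_{k=i+1}^n m_k$, and let $V_i$ be the poset of weakly increasing sequences $0=x_{i_1}\leq x_{i_2}\leq\cdots\leq x_{i_{m_i}}\leq N_i$ with the single-increment cover relation. Since coordinates in distinct blocks are mutually unconstrained and the cover relation of $V(\mathbf m)$ raises exactly one coordinate by $1$, one has $V(\mathbf m)\cong V_1\times\cdots\times V_n$ as graded posets, where each $V_i$ is graded by the coordinate sum (every cover there increments the sum, and each element is reachable from the all-zero sequence by increments performed right-to-left). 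Consequently the rank of $v\in V(\mathbf m)$ is the sum of its coordinates, as already noted in the text, and the rank-generating function of $V(\mathbf m)$ is the product $\prod_{i=1}^n F_i(q)$ with $F_i(q)=\sum_{v\in V_i}q^{|v|}$, $|v|$ denoting the coordinate sum.

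Next I would evaluate each $F_i(q)$. Because $x_{i_1}=0$ is forced, $F_i(q)=\sum q^{y_1+\cdots+y_{m_i-1}}$, the sum ranging over all $0\leq y_1\leq\cdots\leq y_{m_i-1}\leq N_i$. Reversing such a sequence identifies it, size-preservingly, with a partition having at most $m_i-1$ parts, each part at most $N_i$, i.e.\ a partition fitting inside an $(m_i-1)\times N_i$ rectangle. The standard generating-function identity for partitions in a box (see, e.g., Andrews's \emph{Theory of Partitions} or Stanley's \emph{Enumerative Combinatorics}) then yields $F_i(q)=\sqbinom{N_i+m_i-1}{m_i-1}_q$; when $m_i=1$ this reads $\sqbinom{N_i}{0}_q=1$, matching the fact that $V_i$ is a single point.

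Finally I would assemble the pieces. Since $N_i+m_i-1=\bigl(\sum_{k=i+1}^n m_k\bigr)+m_i-1=\bigl(\sum_{j=i}^n m_j\bigr)-1$, we obtain
\[\sum_{v\in V(\mathbf m)}q^{\mathrm{rk}(v)}=\prod_{i=1}^n F_i(q)=\prod_{i=1}^n\sqbinom{(\sum_{j=i}^n m_j)-1}{m_i-1}_q,\]
and because $\mathbf{BL}(\mathbf m)$ and $V(\mathbf m)$ share a rank-generating function (via the bijection $f$ established above, which preserves rank), the same formula holds for $\mathbf{BL}(\mathbf m)$.

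The argument is essentially bookkeeping once the product decomposition is in hand, so there is no deep obstacle. The two points that require genuine care are (i) verifying that the cover relation of $V(\mathbf m)$ is compatible with the product structure and that each factor is graded, so that ranks add correctly across blocks; and (ii) tracking the off-by-one in passing from ``$m_i$ coordinates with the first equal to $0$'' to ``a partition with at most $m_i-1$ parts bounded by $N_i$'' and then into the top argument $\bigl(\sum_{j=i}^n m_j\bigr)-1$ of the $q$-binomial coefficient. I would also spell out explicitly the $q$-analog box identity being invoked rather than leaving it as folklore.
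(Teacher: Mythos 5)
Your proof is correct and follows essentially the same route as the paper's: both decompose the rank-generating function as a product over the label blocks, identify the block-$i$ data with partitions in an $(m_i-1)\times\bigl(\sum_{k=i+1}^n m_k\bigr)$ box, and invoke the standard $q$-binomial generating function for partitions in a rectangle. Your phrasing of the decomposition at the level of posets (a direct product of graded factors) is a slightly more explicit justification of the factorization that the paper states at the level of generating functions, but the content is the same.
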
 

\begin{proof}
For a vector $v\in V(\mathbf m)$ the rank of $v$ is $\sum_i v_i$.
We break this sum into blocks:
\[B_i=\sum_{j=\sum_{k=1}^{i-1}m_k}^{\sum_{k=1}^{i}m_k}v_j,\]
so that the rank of $v$ is $\sum_{i=1}^nB_i$.
Since $v$ must satisfy $0=v_{i_1}\leq v_{i_j}\leq v_{i_{j+1}}\leq\sum_{j=i+1}^nm_j$ for each $i,j$, the number of vectors $v$ with $\mathrm{rank}(v)=r$ is the number of ways to partition each block $B_i$ into at most $m_i-1$ parts each of size at most $\sum_{j=i+1}^nm_j$ (following the notation in \cite[Section 1.7]{EC1}):
\[\sum_{\sum_i B_i=r}p(B_i,\sum_{j=i+1}^nm_j,m_i-1).\]
Thus, the rank-generating function of $V(\mathbf m)$ is
\[\sum_k\left(\sum_{\sum B_i=k}p(B_i,\sum_{j=i+1}^nm_j,m_i-1)\right)q^k.\]
Since the coefficients are sums taken over compositions of the exponents, we can be decompose the summation above as a product of generating functions. 
Separating this generating function into a product gives us:
\[\prod_{i=1}^n \sum_k(p(k,\sum_{j=i+1}^nm_j,m_i-1))q^k.\]
Using \cite[Proposition 1.7.3]{EC1}, namely that 
\[\sqbinom {a+b}b_q=\sum_{\lambda\subseteq a\times b}q^{|\lambda|}=\sum_{k}p(k,a,b)q^k,\]
we can rewrite our product for the rank-generating function of $V(\mathbf m)$ as
\[\prod_{i=1}^n \sqbinom{m_i-1+\sum_{j=i+1}^nm_j}{m_i-1}_q,\]
as was to be proved.
\end{proof}

We now have a description of the rank-generating function of $\mathbf{BL}(\mathbf m)$ for any $m$.
We will now shift our attention to another combinatorial property of $\mathbf{BL}(\mathbf m)$.


\section{Maximal Chains}\label{sec:max_chains}

To find the number of maximal chains in the combinatorial barcode lattice, recall that $\mathbf{BL}(\mathbf m)$ is isomorphic to the principal order ideal generated by the permutation $\beta$ as established in Proposition \ref{principal ideal}.

For a permutation $\sigma=\sigma_1\cdots\sigma_M\in S_M$, define:
\begin{itemize}
    \item $r_i(\sigma)$ to be the number of entries $\sigma_j$ that appear before $\sigma_i$ such that $\sigma_j>\sigma_i$, and

    \item  $s_i(\sigma)$ to be the number of entries $\sigma_j$ that appear after $\sigma_i$ such that $\sigma_j<\sigma_i$.
\end{itemize} 
That is, we define
\begin{align*}
    r_i(\sigma)&=\#\{j\mid j<i,\sigma_j>\sigma_i\}\\
    s_i(\sigma)&=\#\{j\mid j>i,\sigma_j<\sigma_i\}.
\end{align*}

It is a well-known theorem of Stanley \cite[Corollary 4.2]{STANLEY1984359} that if the Ferrers diagram $\lambda(\sigma)$ obtained by reading the positive $r_i(\sigma)$'s for $i\in[M]$ in decreasing order is the same as the transpose of the Ferrers diagram $\mu(\sigma)$ obtained by reading the positive $s_i(\sigma)$'s for $i\in[M]$ in decreasing order, then the number of reduced decompositions of $\sigma$ is equal to the number of standard Young tableaux with shape $\lambda(\sigma)$.
By applying the hook length formula \cite[Corollary 7.21.6]{EC2} we obtain that the number of reduced decompositions of $\sigma$ is
\[\frac{|\lambda(\sigma)|!}{\prod_{(i,j)\in\lambda(\sigma)}h_{i,j}},\]
where $h_{i,j}$ is the number of cells $(k,l)\in\lambda(\sigma)$ with $k\geq i$ and $l\geq j$.

By \cite[Lemma 2.3]{LascouxSchutnzenberger} $\lambda(\sigma)=\mu'(\sigma)$ is equivalent to $\sigma$ being vexillary (i.e., $2143$-avoiding).
We now confirm that the permutation $\beta$ that generates the combinatorial barcode lattice is vexillary.
For the sake of contradiction, let $i,j,$ and $k$ be integers between $1$ and the length of $\beta$ such that $i<j<k$, while $\beta(j)<\beta(i)<\beta(k)$.
Keeping in mind the definition of $\beta$, suppose $\beta(i)$ belongs to block $B_t$.
Since the entries of each block appear in increasing order, $\beta(j)<\beta(i)$ and $i<j$ implies $\beta(j)$ must be from a block $B_s$ with $s<t$, and it cannot be the first element of the block.
Note, however, that this means every entry of $\beta$ after $\beta(j)$ must be lower than $\beta(i)$ since all of the entries of $\beta$ that appear at or after the second entry of the block $B_s$ must be from some block $B_r$ with $r\leq s$.
Thus, there cannot be a $k>j$ with $\beta(k)>\beta(i)$, meaning $\beta$ is $213$-avoiding. In particular, $\beta$ is $2143$-avoiding.
\\

\noindent The following theorem follows from the above exposition. 

\begin{theorem}\label{thm:max_chains}
If $\lambda$ is the Ferrers diagram obtained by ordering the positive entries of $\{\{r_i(\beta)\mid i\in[M]\}\}$ in decreasing order, where $\beta$ is the permutation generating $\mathbf{BL}(\mathbf m)$, then the number of maximal chains in $\mathbf{BL}(\mathbf m)$ is
    \[\frac{|\lambda|!}{\prod_{(i,j)\in\lambda}h_{i,j}},\]
    where $h_{i,j}$ is the number of cells $(k,l)\in\lambda(\sigma)$ with $k\geq i$ and $l\geq j$.
\end{theorem}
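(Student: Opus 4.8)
The plan is to reduce the count of maximal chains to a count of reduced decompositions of the generating permutation $\beta$, and then feed that into the combinatorial identities already recalled in the exposition. First, by Proposition \ref{principal ideal}, $\mathbf{BL}(\mathbf m)$ is isomorphic to the principal order ideal $[e,\beta]$ of $S_M$ in the weak Bruhat order. In the weak order a cover relation $\sigma\lessdot\sigma s_k$ is exactly right multiplication by an adjacent transposition $s_k$ with $\ell(\sigma s_k)=\ell(\sigma)+1$, so a saturated chain from $e$ to $\beta$ is a sequence $e\lessdot s_{a_1}\lessdot s_{a_1}s_{a_2}\lessdot\cdots\lessdot s_{a_1}\cdots s_{a_\ell}=\beta$, i.e. precisely a reduced word $s_{a_1}\cdots s_{a_\ell}$ for $\beta$; conversely each reduced word for $\beta$ produces such a chain, and every element it passes through is $\le\beta$, hence lies inside the interval. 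So maximal chains of $\mathbf{BL}(\mathbf m)$ are in bijection with reduced decompositions of $\beta$. One should also record that $\ell(\beta)=\sum_i r_i(\beta)=|\lambda(\beta)|$, which confirms that all maximal chains have the common length $|\lambda|$.

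Next I would invoke the fact established in the paragraph just before the theorem: $\beta$ is $213$-avoiding, hence $2143$-avoiding, hence vexillary; by \cite[Lemma 2.3]{LascouxSchutnzenberger} this gives $\lambda(\beta)=\mu(\beta)'$, so the hypothesis of Stanley's \cite[Corollary 4.2]{STANLEY1984359} holds. That corollary then yields that the number of reduced decompositions of $\beta$ equals the number of standard Young tableaux of shape $\lambda(\beta)$, where $\lambda(\beta)$ is exactly the Ferrers diagram $\lambda$ in the statement (the positive $r_i(\beta)$'s arranged in weakly decreasing order). Finally, the hook length formula \cite[Corollary 7.21.6]{EC2} evaluates the number of such tableaux as $|\lambda|!\big/\prod_{(i,j)\in\lambda}h_{i,j}$, and composing the three equalities gives the claimed formula.

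The step that needs the most care is the first one: pinning down that maximal chains in the weak-order interval $[e,\beta]$ are literally the reduced words of $\beta$ — that no chain can leave the interval, that every chain has length $\ell(\beta)$, and that no reduced word is lost. This is routine Coxeter-theoretic bookkeeping, but it is where the argument actually has content; the remaining two steps are direct applications of cited results, with vexillarity of $\beta$ already checked in the exposition. A minor point worth a sentence is that Stanley's theorem is usually phrased for a single permutation, so one just remarks that it applies verbatim to $\beta$ once vexillarity (equivalently $\lambda(\beta)=\mu(\beta)'$) is in hand.
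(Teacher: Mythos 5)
Your proposal is correct and follows essentially the same route as the paper: reduce to the principal order ideal $[e,\beta]$ via Proposition \ref{principal ideal}, identify maximal chains with reduced words for $\beta$, invoke vexillarity of $\beta$ together with Stanley's result to count reduced words by standard Young tableaux of shape $\lambda(\beta)$, and finish with the hook length formula. The only difference is that you spell out the chain--reduced-word bijection that the paper leaves implicit in its ``follows from the above exposition,'' which is a welcome addition rather than a divergence.
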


When we set $\mathbf{m}=(k^n)$, we obtain the following corollary.

\begin{corollary}
The number of maximal chains in $\mathbf{BL}(k^n)$ is
\[n!\left(\prod_{i=1}^{n-1}\left(\prod_{j=1}^{2k-2}(((2k-1)(n-i))+j)^{\min\{j,2k-1-j\}}\right)^i\right)^{-1}\]
and in particular the number of maximal chains in $\mathbf{BL}(2^n)$ is
\[(n(n-1))!\prod_{i=1}^{n-1}\frac{3^i(i!)}{(3i)!}.\]
\end{corollary}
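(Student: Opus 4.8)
The plan is to specialize Theorem~\ref{thm:max_chains} to the case $\mathbf m = (k^n)$ by computing explicitly the partition $\lambda$ associated to the generating permutation $\beta$, then applying the hook length formula. Recall that for $\mathbf m = (k^n)$ we have $M = kn$, and $\beta$ written in one-line notation on the ground set $\{1_1,\dots,1_k,\dots,n_1,\dots,n_k\}$ is
\[
1_1 2_1 \cdots n_1,\; n_2 \cdots n_k,\; (n-1)_2 \cdots (n-1)_k,\; \dots,\; 1_2 \cdots 1_k.
\]
First I would compute the multiset $\{\{r_i(\beta)\}\}$ of positive "inversions to the left." The entries $i_1$ contribute $r=0$. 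For the entries $i_j$ with $j\geq 2$ (sitting in the "reversed tail"), I would count how many earlier entries of $\beta$ exceed $i_j$: these are exactly the entries with label $>i$ that occur before $i_j$, namely the first entries $(i+1)_1,\dots,n_1$ (that is, $n-i$ of them) together with all of $n_2\dots n_k,\dots,(i+1)_2\dots(i+1)_k$ (that is, $(n-i)(k-1)$ of them), giving $r_{i_j}(\beta) = (n-i) + (n-i)(k-1) = (n-i)k$ — but wait, one must be careful: among the tail entries, $i_j$ for $j\ge2$ comes \emph{after} $i_2,\dots,i_{j-1}$ which are smaller, so they don't count, and the blocks with label $<i$ come later in the tail, so they don't count either. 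Thus each of the $k-1$ entries $i_2,\dots,i_k$ contributes $r = (n-i)(k-1) + (n-i) $; I need to recheck whether the $n-i$ "first entries" $(i+1)_1,\dots,n_1$ are actually larger than $i_j$ under the identification — they are, since labels are what matter for the $\mathbf{BL}$-order but $\beta$ lives in $S_M$ where the total order $1_1<2_1<\dots<n_1<n_2<\dots$ makes $(i+1)_1 < i_2$. So in fact only the $(n-i)(k-1)$ tail entries with larger label count, giving $r_{i_j}(\beta) = (n-i)(k-1)$ for each of $j=2,\dots,k$. Hence $\lambda$ has, for each $i\in[n-1]$, exactly $k-1$ parts equal to $(n-i)(k-1)$.

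Next I would assemble $\lambda$ as a Young diagram: sorting the parts $\{(n-1)(k-1)^{\times(k-1)}, (n-2)(k-1)^{\times(k-1)}, \dots, (1)(k-1)^{\times(k-1)}\}$ in decreasing order gives a "staircase-like" shape that is $(k-1)$-fold thick at each step. I would then identify $|\lambda| = (k-1)\sum_{i=1}^{n-1} (n-i)(k-1) = (k-1)^2 \binom{n}{2}$, and confirm this equals the rank of the top element $\mathrm{rk}(\beta)$, which is the number of inversions of the fully-reversed barcode — a good consistency check. Then I would apply the hook length formula $|\lambda|!/\prod h_{i,j}$. The product of hook lengths over this block-staircase shape factors nicely: grouping cells by which "step" they lie in and using the fact that the shape is a union of $(k-1)\times(k-1)$ rectangular blocks arranged in a triangular array, the hook length of a cell depends only on its block-coordinates, and one obtains (after reindexing $i \mapsto n-i$ and $j$ running over the within-block offsets $1,\dots,2k-2$ with multiplicity $\min\{j, 2k-1-j\}$) the claimed product
\[
\prod_{i=1}^{n-1}\left(\prod_{j=1}^{2k-2}\bigl((2k-1)(n-i)+j\bigr)^{\min\{j,\,2k-1-j\}}\right)^{i}.
\]
The extra factor $n!$ out front comes from writing $|\lambda|! = (kn \cdot \text{something})$... more precisely from the hook lengths of the first "column-strip" of length $n$ contributed by — hmm, actually I'd need to track where the $n!$ arises; I expect it comes from the hooks along the longest arm, or equivalently from a telescoping in the hook product. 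I would verify the $k$-general formula against the $\mathbf{BL}(2,2,2)$ case ($n=3$, $k=2$): the formula should give the number of maximal chains in Figure~\ref{BL 2,2,2}, which can be counted by hand.

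For the $\mathbf{BL}(2^n)$ specialization I would simply set $k=2$: then $2k-1 = 3$, $2k-2 = 2$, the inner product over $j\in\{1,2\}$ has exponents $\min\{1,2\}=1$ and $\min\{2,1\}=1$, so it becomes $(3(n-i)+1)(3(n-i)+2)$, and $|\lambda| = \binom n2 = n(n-1)/2$... but the claimed answer has $(n(n-1))!$, so in fact for $k=2$ we get $|\lambda| = (k-1)^2\binom n2 = \binom n2$ — that doesn't match $(n(n-1))!$ either. Let me recheck: $|\lambda|$ should be the total number of boxes $= $ total rank $= $ number of inversions of $\beta$. For $\mathbf{BL}(2^n)$ the reversed barcode is $1_1\cdots n_1 n_2 \cdots 1_2$ and its inversion count I'd compute directly; I suspect it is $n(n-1)$, forcing $|\lambda| = n(n-1)$, which then means my per-part count above is off by a factor and each $i$ actually contributes parts summing to $2(n-i)$ for $k=2$, i.e. the shape is the staircase $\delta$ doubled. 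I would recompute $r_{i_j}(\beta)$ more carefully for $k=2$ as the base case, get the staircase-type shape with $|\lambda| = n(n-1)$, apply hook lengths to get $(n(n-1))!\prod_{i=1}^{n-1} \frac{3^i (i!)}{(3i)!}$ — here the $3^i$ and $(i!)$ in the numerator and $(3i)!$ in the denominator should emerge from the hook-length product over the $i$-th diagonal block of a "double staircase," since a $2\times2$-thick step of depth $i$ contributes hooks whose product is $(3i)!/(3^i i! \cdot (\text{correction}))$. I would then reconcile this with the general-$k$ formula at $k=2$ as a final check.

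The main obstacle will be Step~2–3: correctly identifying the Young diagram $\lambda(\beta)$ — in particular pinning down the multiset $\{\{r_i(\beta)\}\}$ without sign or off-by-one errors (the subtlety being that $\beta \in S_M$ uses the \emph{total} order on indexed letters, not the label order) — and then showing the hook-length product over this block-staircase shape telescopes into the stated closed form. The hook-length bookkeeping is the crux: the shape is not a rectangle or a plain staircase but a "thickened staircase," and expressing $\prod_{(i,j)\in\lambda} h_{i,j}$ requires partitioning the cells by diagonal block and recognizing that the exponent $\min\{j, 2k-1-j\}$ counts how many cells in a given block share a given hook length. Once $\lambda$ is correctly described, I expect the hook computation to be a (somewhat intricate) exercise in reindexing products, and the $k=2$ case to follow by direct substitution plus simplification of the resulting ratio of factorials and powers of $3$.
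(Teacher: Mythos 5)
Your overall strategy matches the paper's (compute the multiset $\{\{r_i(\beta)\}\}$, read off $\lambda(\beta)$, apply the hook length formula to the resulting thickened staircase), but there is a concrete error that derails the computation of $\lambda$. Your first count, $r_{i_j}(\beta)=(n-i)+(n-i)(k-1)=(n-i)k$, was correct; the subsequent ``correction'' to $(n-i)(k-1)$ rests on a misstatement of the total order on the ground set. Under the identification in Proposition~\ref{principal ideal} ($M_i+j\mapsto(i+1)_j$), the letters of $S_M$ are ordered label-first, index-second: $1_1<1_2<\dots<1_{k}<2_1<\dots$, so $(i+1)_1>i_2$ and the $n-i$ first-occurrence entries $(i+1)_1,\dots,n_1$ \emph{do} count as inversions above $i_j$. (The order you wrote down is just the one-line notation of $\beta$ itself; if the ground set were ordered that way, $\beta$ would be the identity and every $r_i$ would vanish.) The correct shape is $\lambda=((k(n-1))^{k-1},(k(n-2))^{k-1},\dots,k^{k-1})$, with $|\lambda|=k(k-1)\binom n2$ --- which equals $n(n-1)$ at $k=2$, matching the $(n(n-1))!$ in the statement --- and the natural block decomposition is into $\binom n2$ rectangles of size $(k-1)\times k$, not $(k-1)\times(k-1)$. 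You do notice the resulting inconsistency when you sanity-check $|\lambda|$ against the $k=2$ formula, which is to your credit, but you then reverse-engineer a fix from the answer instead of locating the actual mistake.

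The second gap is that the hook-length bookkeeping --- the real content of the proof --- is left at the level of ``I expect it to telescope.'' The paper's argument is: partition $\lambda$ into $\binom n2$ translates of a $(k-1)\times k$ rectangle; all rectangles at a fixed anti-diagonal depth $i$ carry identical hook-length fillings and there are $i$ of them; within one rectangle the hook lengths are constant along diagonals, taking $2k-2$ consecutive values with the $j$th value occurring on a diagonal of length $\min\{j,2k-1-j\}$; and passing from depth $i+1$ to depth $i$ shifts all values by $2k-1$. Without carrying out at least this computation the closed form for $\prod h_{i,j}$ is not established. Your proposed numerical check on $\mathbf{BL}(2,2,2)$ is worth actually doing: there $\lambda=(4,2)$, the hook product is $80$, and $6!/80=9$ maximal chains, which agrees with the $k=2$ formula but shows that the prefactor in the general formula must be $|\lambda|!=\bigl(k(k-1)\binom n2\bigr)!$ rather than the literal $n!$, and that the shift inside the product should involve $(2k-1)(n-1-i)$; so you should not expect to reproduce the first display verbatim, and the check is the right tool for catching both your error and the statement's.
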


\begin{proof} 
We can check that the permutation $\beta$ generating $\mathbf{BL}(k^n)$ has $r_i(\beta)=0$ for $i\leq n$ and $r_i(\beta)=k\lfloor\frac{i-n-1}{k-1}\rfloor$ for $n+1\leq i\leq kn$.
It follows that the resulting diagram $\lambda$ is \[\lambda=((k(n-1))^{k-1},(k(n-2))^{k-1},\dots,k^{k-1}).\]
Let $H=(H_{ij})$ be the filling of the cells of $\lambda$ with their hook length (i.e., $H_{ij}=h_{ij}$).
Then the number of maximal chains in $\mathbf{BL}(\mathbf m)$ is $n!$ divided by the product of the cells in the filling $H$.
To simplify calculations, note that the diagram can be partitioned into $\binom n2$ rectangles of size $(k-1)\times k$ by placing the top left cell of a $(k-1)\times k$ rectangle at cell $(i(k-1)+1,jk+1)$ of $\lambda$ for each $i$ and $j$ for which the cell is contained in $\lambda$.
(See Figure \ref{ferrersDiag} for an example.)

\begin{figure}
    \centering
    \ytableausetup{nosmalltableaux}
    \begin{ytableau}
    *(yellow)14&*(yellow)13&*(yellow)12&*(magenta) 9&*(magenta) 8&*(magenta)7 &*(cyan)4 &*(cyan)3 &*(cyan) 2\\
    *(yellow)13&*(yellow)12&*(yellow)11&*(magenta) 8&*(magenta) 7&*(magenta)6 &*(cyan)3 &*(cyan)2 &*(cyan) 1\\
    *(magenta) 9&*(magenta) 8&*(magenta)7 &*(cyan)4 &*(cyan)3 &*(cyan) 2\\
    *(magenta) 8&*(magenta) 7&*(magenta)6 &*(cyan)3 &*(cyan)2 &*(cyan) 1\\
    *(cyan) 4& *(cyan) 3& *(cyan) 2\\
    *(cyan) 3&*(cyan) 2&*(cyan) 1
\end{ytableau}
    \caption{The filling $H$ of the Ferrers diagram $\lambda(\beta)$ for $\beta$ generating $\mathbf{BL}(3^4)$, partitioned into rectangles of size $2\times3$.
    \label{ferrersDiag}}
\end{figure}

With this in mind, we define $R_{ij}$ for $2\leq i+j\leq n$ to be the $(k-1)\times k$ array of numbers where the element of $R_{ij}$ in row $a$ and column $b$ is the hook length (in $\lambda$) of the cell in the $a^{th}$ row and $b^{th}$ column of the rectangle starting at cell $((i-1)(k-1)+1,(j-1)k+1)$ of $\lambda$.
More succinctly, we let $R_{ij}:[k-1]\times[k]\rightarrow \mathbb N$ be given by
\[R_{ij}(a,b)=H_{((i-1)(k-1)+a,(j-1)k+b)}.\]
Notice that for all $(i,j)$ and $(i',j')$ with $i+j=i'+j'\leq n$ we have $R_{ij}=R_{i'j'}$, and because of this we need only keep track of rectangles in the top row.
Letting $R_i=R_{i1}$, 
since the rectangle corresponding to $R_i$ occurs $i$ times in $\lambda$, we have that the product of the hook lengths is
\[\prod_{i=1}^{n-1}\left(\prod_{(a,b)\in[k-1]\times[k]}R_i(a,b)\right)^i.\]

Next, we calculate the $R_i(a,b)$.
Define $\mathrm{Hook}(a,b)$ to be the hook starting at $(a,b)$ in $\lambda$, then observe that $\#(\mathrm{Hook}(a,b)\setminus R)$ is the same for any $(a,b)\in R$.
Thus, we have \[R_i(a,b)=1+R_i(a+1,b)=1+R_i(a,b+1),\] which also gives us \[R_i(a,b)=R_i(a-1,b+1)=R_i(a+1,b-1).\]
From these two equalities, we can conclude that there are $(k-1)+k-1=2k-2$ distinct values in the cells of $R_i$ since we have
\begin{align*}
R_i(1,1)&=1+R_i(2,1)\\
&\vdots\\
&=(k-3)+R_i(k-2,1)\\
&=(k-2)+R_i(k-1,1)\\
&=(k-1)+R_i(k-1,2)\\
&\vdots\\
&=(2k-3)+R_i(k-1,k),
\end{align*}
and every other cell in the rectangle is determined by these $2k-2$ values.
Let us relabel once again and take $R_i(j)$ for $1\leq j\leq2k-2$ to be
\[R_i(j)=\begin{cases}
    R_i(j,1)& \text{ if } j\leq k-1,\\
    R_i(k-1,j-k+2)& \text{ if } k\leq j.
\end{cases}\]
The cells $(a,b)\in [k-1]\times[k]$ such that $R_i(a,b)=R_i(j)$ appear in a diagonal of length $\min\{j,2k-1-j\}$.
It follows that the product of the hook lengths is
\[\prod_{i=1}^{n-1}\left(\prod_{j=1}^{2k-2}(R_i(k-1,k)+j)^{\min\{j,2k-1-j\}}\right)^i.\]
Now, note that \[R_{n-1}(k-1,k)=1 \text{ and } R_{i}(k-1,k)=R_{i+1}(k-1,k)+2k-1,\] since there are $k$ more cells to the right of $R_i(j)$ than there are to the right of $R_{i+1}(j)$ and there are $k-1$ more cells below $R_i(j)$ than there are below $R_{i+1}(j)$ for all $j$.
Thus, we can finally conclude that the product of the hook lengths of cells in $\lambda$ is
\[\prod_{i=1}^{n-1}\left(\prod_{j=1}^{2k-2}(((2k-1)(n-i))+j)^{\min\{j,2k-1-j\}}\right)^i,\]
and the desired result follows from the hook length formula.
\end{proof}

\section{Connection to matchings}\label{sec:matchings}

Jaramillo-Rodriguez in \cite{jaramillorodriguez2022barcode} was mainly interested in barcodes as combinatorial invariants on the space of barcodes, but it is interesting to note that the underlying set of $\mathbf{BL}(2^n)$ is the set of perfect matchings on $[2n]$.
This introduces a new order on the set of perfect matchings, and the following two results suggest that this may be a natural way to order the matchings.

\begin{proposition}
The permutational matchings on $[2n]$, i.e., the matchings that avoid the pattern $1122$, consist of exactly the matchings in the interval \[[12\cdots n12\cdots n,12\cdots nn\cdots 21]\subseteq \mathbf{BL}(2^n).\]
\end{proposition}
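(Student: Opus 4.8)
The plan is to pass to the word model $\mathbf{BL}(2^n)$, first reinterpret $1122$-avoidance of a matching as a statement about its word, and then show that the words satisfying that condition are exactly the claimed interval. Throughout I identify a matching $M$ on $[2n]$ with its word $w\in\mathbf{BL}(2^n)$, where the letter $i$ labels the arc whose minimum is the $i$-th smallest arc-minimum, so the two occurrences of $i$ are exactly the endpoints of that arc.

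First I would unwind $1122$-avoidance. Write the arcs of $M$ as $(l_1,r_1),\dots,(l_n,r_n)$ with $l_1<\dots<l_n$ and each $l_k<r_k$. A $1122$-pattern is precisely a pair of arcs $(l_i,r_i),(l_j,r_j)$ with $i<j$ and $r_i<l_j$, so $M$ avoids $1122$ iff $l_j<r_i$ for all $i<j$, which (as the $l$'s increase) is equivalent to $l_n<r_i$ for every $i<n$. Together with $l_n<r_n$ this says every right endpoint exceeds $l_n$, so every position after $l_n$ is a right endpoint; since the $n$ left endpoints all lie in $\{1,\dots,l_n\}$ and the $n$ right endpoints all lie in $\{l_n+1,\dots,2n\}$, counting forces $l_n=n$, i.e.\ the left endpoints are exactly $\{1,\dots,n\}$. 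Conversely, if the left endpoints are $\{1,\dots,n\}$ then $l_j\le n<n+1\le r_i$ for all $i<j$, so $M$ avoids $1122$. Translating to words, $M$ is permutational iff $w$ begins with $12\cdots n$, i.e.\ $w=12\cdots n\,\sigma$ for a permutation $\sigma$ of $\{1,\dots,n\}$ filling the last $n$ positions (these are the $n!$ bijections $[n]\to[n+1,2n]$, which explains the terminology).

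Now let $X=\{w\in\mathbf{BL}(2^n): w\text{ begins with }12\cdots n\}$. I would show $X$ is an order filter of $\mathbf{BL}(2^n)$: if $w\lessdot w'$ with $w\in X$, the cover swaps two adjacent increasing entries of $w$ into a descent, and this swap cannot lie inside the first $n$ positions (where $w$ reads $1,2,\dots,n$, and swapping $i,i+1$ would move the first occurrence of $i+1$ before that of $i$, leaving $\mathbf{BL}(2^n)$), nor can it straddle positions $n$ and $n+1$ (the entry in position $n$ equals $n$ while the entry in position $n+1$ is at most $n$, so they are not increasing); hence it lies in the last $n$ positions and $w'$ still begins with $12\cdots n$, so $w'\in X$. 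As $\mathbf{BL}(2^n)$ is graded (with rank $\mathrm{inv}$), $\le$ is the transitive closure of $\lessdot$, so $X$ is up-closed. Moreover $b:=12\cdots n\,12\cdots n\in X$ is the bottom of $X$: given $w=12\cdots n\,\sigma\in X$, lifting a saturated chain from the identity to $\sigma$ in the weak order on $S_n$ (applied to the last $n$ coordinates) produces a saturated chain from $b$ to $w$ in $\mathbf{BL}(2^n)$, so $b\le w$.

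Finally, $t:=12\cdots n\,n\cdots 21$ is the fully reversed barcode, which is the maximum of $\mathbf{BL}(2^n)$, and $t\in X$. Hence $[b,t]=\{w\in\mathbf{BL}(2^n): w\ge b\}$, and by the previous paragraph this set equals $X$: it contains $X$ because $b$ is the bottom of $X$, and is contained in $X$ because $b\in X$ and $X$ is up-closed. Combining with the first paragraph, the permutational matchings are exactly the elements of $[12\cdots n\,12\cdots n,\ 12\cdots n\,n\cdots 21]$. I expect the genuinely delicate point to be the cover analysis showing $X$ is a filter — it rests on the fact that covers of $\mathbf{BL}(2^n)$ are required to remain inside $\mathbf{BL}(2^n)$, which is exactly what rules out a swap that would disturb the prefix $12\cdots n$ — with the endpoint count of the first paragraph being the other place demanding care.
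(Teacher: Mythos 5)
Your proof is correct, and it splits into the same two halves as the paper's: first a characterization of $1122$-avoidance as ``the word has the form $1_12_1\cdots n_1\sigma$,'' then an identification of those words with the interval above $12\cdots n12\cdots n$. The first half differs only cosmetically (you count endpoints to force $l_n=n$; the paper observes directly that a $1122$-occurrence is the same as some second occurrence $a_2$ preceding some first occurrence $b_1$), but the second half is genuinely different. The paper invokes the standard fact that the weak Bruhat order is containment of inversion sets: the permutational words are exactly those whose inversion set contains $\{(j_1,i_2)\mid i<j\}$, which is precisely the inversion set of $12\cdots n12\cdots n$, so the claim is immediate. You instead avoid that criterion and argue at the level of cover relations: you show the set $X$ of words with prefix $12\cdots n$ is an order filter (the cover analysis ruling out swaps inside or straddling the prefix), exhibit $12\cdots n12\cdots n$ as the minimum of $X$ by lifting a saturated chain in the weak order on $S_n$ acting on the last $n$ positions, and conclude since the top of the interval is the global maximum. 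Your route is more elementary and self-contained --- it uses only the definition of the covering relation in $\mathbf{BL}(2^n)$ --- at the cost of being longer and requiring the explicit chain construction; the paper's route is a one-line deduction once the inversion-set description of the weak order is taken as known. Both are sound.
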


\begin{proof}
Recall that the first appearances of each label any matching $s\in\mathbf{BL}(\mathbf 2^n)$ must occur in increasing order.
(That is, $s$ must contain the subsequence $1_12_1\cdots n_1$ in that order).
Now note that an occurrence of the pattern $1122$ in $s$ corresponds to the presence of the subsequence $a_1a_2b_1b_2$ in $s$ for some $a,b\in[n]$.
Such a subsequence occurs exactly when there is an occurrence of some entry $a_2$ before an occurrence of some entry $b_1$, for $a$ and $b$ in $[n]$, so we observe that a matching $s$ is permutational if and only if it is of the form $\tau=1_12_1\cdots n_1\sigma$, where $\sigma$ is a permutation of $\{1_2,\dots,n_2\}$.

It follows that a matching $s$ is permutational if and only if its inversion set contains $\{(i_1,j_2)\mid j<i\in[n]\}$.
Since $12\cdots n12\cdots n$ is the matching having exactly this inversion set, and since the weak Bruhat order on the symmetric group is the same an inclusion of inversion sets, we have that $s\geq12\cdots n12\cdots n$ if and only if $s$ is permutational.
\end{proof} 

\begin{corollary}\label{permutational cor}
There are $n!$ elements in the interval $[12\cdots n12\cdots n,12\cdots nn\cdots 21]$. 
\end{corollary}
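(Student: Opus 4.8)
The plan is to read this off directly from the preceding Proposition, which identifies the interval $[12\cdots n12\cdots n,\,12\cdots nn\cdots21]$ with the set of permutational matchings, together with the structural description obtained in that proof. Concretely, the Proposition establishes that a matching $s\in\mathbf{BL}(2^n)$ lies in this interval if and only if $s$ is permutational, and that $s$ is permutational if and only if $s$ has the form $1_12_1\cdots n_1\sigma$, where $\sigma$ is a permutation of the set $\{1_2,2_2,\dots,n_2\}$. So the first step is simply to invoke this characterization.

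Next I would set up the obvious bijection between the interval and the symmetric group on $\{1_2,\dots,n_2\}$, sending the matching $1_12_1\cdots n_1\sigma$ to the word $\sigma$. This map is well-defined by the Proposition (every element of the interval has this shape), it is surjective (for any permutation $\sigma$ of $\{1_2,\dots,n_2\}$, the word $1_12_1\cdots n_1\sigma$ is a valid element of $\mathbf{BL}(2^n)$ — the required subsequence $1_12_1\cdots n_1$ appears in order and each label appears exactly twice — and it is permutational, hence in the interval), and it is injective since the matching determines $\sigma$ as its final $n$ letters. Therefore the interval has exactly $|\mathfrak S_n|=n!$ elements.

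There is essentially no obstacle here: the content is entirely front-loaded into the Proposition, and the corollary is a one-line counting consequence once the ``of the form $1_12_1\cdots n_1\sigma$'' normal form is in hand. The only thing to be careful about is confirming that \emph{every} such $\sigma$ yields a legitimate (and permutational) barcode, so that the $n!$ words are genuinely all realized in the interval and not merely an upper bound; this is immediate from the defining conditions of $\mathbf{BL}(2^n)$ and the pattern-avoidance criterion established above.

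\begin{proof}
By the previous Proposition, the elements of $[12\cdots n12\cdots n,\,12\cdots nn\cdots21]$ are exactly the permutational matchings, and each such matching has the form $1_12_1\cdots n_1\sigma$ where $\sigma$ is a permutation of $\{1_2,\dots,n_2\}$. Conversely, for any permutation $\sigma$ of $\{1_2,\dots,n_2\}$ the word $1_12_1\cdots n_1\sigma$ is an element of $\mathbf{BL}(2^n)$: the subsequence $1_12_1\cdots n_1$ appears in order and each label occurs exactly twice. Moreover $1_12_1\cdots n_1\sigma$ contains no subsequence $a_1a_2b_1b_2$ (every $b_1$ precedes every $a_2$), so it is permutational and hence lies in the interval. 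The assignment $1_12_1\cdots n_1\sigma\mapsto\sigma$ is therefore a bijection between the interval and the set of permutations of $\{1_2,\dots,n_2\}$, of which there are $n!$.
\end{proof}
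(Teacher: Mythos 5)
Your proof is correct and follows essentially the same route as the paper: invoke the preceding proposition to identify the interval with the permutational matchings, then count them via the normal form $1_12_1\cdots n_1\sigma$ with $\sigma$ a permutation. Your version just spells out the bijection and the surjectivity check that the paper leaves implicit.
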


\begin{proof}
This follows directly from the previous observation and the fact that there are $n!$ permutations of $[n]$ and thus $n!$ permutational matchings.
\end{proof}

\begin{proposition}
The non-nesting matchings on $[2n]$, i.e., the matchings that avoid the pattern $1221$, consist of exactly the matchings in the interval \[[1122\cdots nn,12\cdots n12\cdots n]\subseteq \mathbf{BL}(\mathbf m).\]
\end{proposition}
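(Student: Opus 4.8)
The plan is to mirror the structure of the preceding proposition on permutational matchings. First I would translate the pattern-avoidance condition into a statement about the positions of the entries $i_1, i_2$ in a matching $s \in \mathbf{BL}(2^n)$. A pattern $1221$ occurrence corresponds to a subsequence $a_1 b_1 b_2 a_2$ in $s$ for some $a < b$ in $[n]$ (using that first occurrences come in increasing order, the $1$s of the pattern must be the two copies of the smaller label $a$ and the $2$s the two copies of the larger label $b$). So $s$ is non-nesting precisely when there is no pair $a < b$ with $b_2$ occurring before $a_2$; equivalently, the second copies $1_2, 2_2, \dots, n_2$ appear in $s$ in an order that respects... no — I need to be careful: non-nesting means whenever $a<b$ and $a_1$ precedes $b_1$, we do \emph{not} have $b_2$ before $a_2$, i.e.\ $a_2$ precedes $b_2$. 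So the subsequence of second copies must itself be $1_2 2_2 \cdots n_2$ in that order.

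Next I would compute the inversion set of such a matching and compare with the endpoints of the claimed interval. If the second copies appear in the order $1_2 2_2 \cdots n_2$, then for $i < j$ the entry $i_2$ precedes $j_2$, so $(j_1, i_2)$-type inversions (with $j > i$) are the only inversions that can occur, together with possibly $(j_1, i_1)$? No: first copies are forced into increasing order so there are no inversions among them. Hence the inversion set of a non-nesting matching is a subset of $\{(j_1, i_2) \mid i < j \in [n]\}$, which is exactly the inversion set of $12\cdots n\,12\cdots n$. Conversely, $1122\cdots nn$ is the identity with empty inversion set. Using that the weak Bruhat order is containment of inversion sets (as invoked in the previous proof), $1122\cdots nn \le s$ always, and $s \le 12\cdots n\,12\cdots n$ iff the inversion set of $s$ is contained in $\{(j_1,i_2) \mid i<j\}$, which I must show is equivalent to $s$ being non-nesting. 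One direction is the computation just sketched; for the other, if $\mathrm{inv}(s) \subseteq \{(j_1,i_2)\mid i<j\}$ then in particular no pair $i_2, j_2$ with $i<j$ is inverted, so the second copies appear in order $1_2\cdots n_2$, which forces $s$ to be non-nesting by the first step.

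The main obstacle I anticipate is the bookkeeping in the first step: correctly identifying which two labels in a $1221$ occurrence play the role of the $1$s versus the $2$s, and confirming that the forced subsequence $1_1 2_1 \cdots n_1$ does not introduce spurious pattern occurrences or block legitimate ones. Once the characterization ``$s$ non-nesting $\iff$ second copies appear in increasing label order $\iff \mathrm{inv}(s) \subseteq \{(j_1,i_2) \mid i<j \in [n]\}$'' is nailed down, the interval claim is immediate from the inversion-set description of weak Bruhat order, exactly as in the permutational case. I would also remark (as a likely corollary, parallel to Corollary~\ref{permutational cor}) that this interval has the Catalan number $C_n$ elements, since non-nesting matchings on $[2n]$ are counted by $C_n$.
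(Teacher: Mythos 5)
Your proposal is correct and follows essentially the same route as the paper's proof: characterize non-nesting as the second copies $1_2,\dots,n_2$ appearing in increasing order (equivalently, the first- and second-copy subsequences having the same relative order), deduce that the inversion set is then contained in $\{(j_1,i_2)\mid i<j\}$, and conclude via the identification of the weak Bruhat order with containment of inversion sets. Your treatment of both directions is slightly more explicit than the paper's, but the argument is the same.
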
 

\begin{proof}
A matching $s$ is non-nesting if and only if the subsequence of $s$ consisting of only the entries $1_1,2_1,\dots,n_1$ has the same relative order as the subsequence of $s$ consisting of only the entries $1_2,2_2,\dots,n_2$.
Since matchings are combinatorial barcodes, we have that these subsequences must be appear in increasing order, so $s$ is a non-nesting matching if and only if the only inversions of $s$ are of the form \[(i_1,j_2) 
\text{ for } j<i.\]
This is equivalent to $s$ being less than or equal to $12\cdots n12\cdots n$ in the weak Bruhat order, since the inversion set of $12\cdots n12\cdots n$ is $\{(j_1,i_2)\mid j<i\}$.
\end{proof} 

\begin{corollary}\label{nonnesting cor} The size of the interval $[1122\cdots nn,12\cdots n12\cdots n]$ is the $n^{th}$ Catalan number, 
\[C_n=\frac{1}{n+1}\binom{2n}{n}.\]
\end{corollary}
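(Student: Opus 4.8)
The plan is to combine the preceding proposition with a standard Catalan bijection. By that proposition, the interval $[1122\cdots nn,12\cdots n12\cdots n]$ \emph{is} the set of non-nesting perfect matchings on $[2n]$, so it suffices to show that there are exactly $C_n=\frac{1}{n+1}\binom{2n}{n}$ of these. To do that, I would encode a perfect matching $s$ on $[2n]$ by its \emph{type word} $w(s)\in\{U,D\}^{2n}$, whose $i$-th letter is $U$ if $i$ is the smaller element of its block (an \emph{opener}) and $D$ if it is the larger (a \emph{closer}). Since every closer is matched to a strictly smaller opener, every prefix of $w(s)$ has at least as many $U$'s as $D$'s and the total counts agree; hence $w(s)$ is a Dyck word of length $2n$, and there are $C_n$ of those.

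The content is then to show that $s\mapsto w(s)$ restricts to a bijection on non-nesting matchings. First I would prove that a non-nesting matching $s$ is recovered from $w(s)$ by the first-in-first-out rule: scanning positions $1,\dots,2n$ while maintaining a queue of currently unmatched openers, match each closer to the \emph{oldest} unmatched opener. Indeed, if this FIFO reconstruction first disagreed with $s$ at a closer $q$ — matching it to an older unmatched opener $p'$ while $s$ matches $q$ to a later opener $p>p'$ — then, writing $q'>q$ for the $s$-partner of $p'$, the positions $p'<p<q<q'$ carrying blocks $\{p',q'\}$ and $\{p,q\}$ would realize the forbidden pattern $1221$, a contradiction. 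Conversely, applying the FIFO rule to an arbitrary Dyck word $w$ produces a perfect matching with type word $w$ (each $D$ has an earlier unmatched $U$ by the prefix property, and equal counts force everyone to be matched), and this matching is non-nesting: if arcs $\{a,c\}$ and $\{b,d\}$ with $a<b<d<c$ were nested, then at the step processing the closer $d$ the opener $a$ is still unmatched and strictly older than $b$, so the FIFO rule forbids $d$ from pairing with $b$. Thus $s\mapsto w(s)$ is a bijection from non-nesting matchings on $[2n]$ onto Dyck words of length $2n$, and the count $C_n$ follows.

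I expect the only real work — and it is routine bookkeeping rather than a genuine obstacle — to be the two verifications that pin down the FIFO rule: that a non-nesting matching is reconstructed by it (giving injectivity of $s\mapsto w(s)$) and that the matching read off a Dyck word is always non-nesting (giving surjectivity). Both reduce, as sketched above, to exhibiting a $1221$ pattern from a FIFO violation. Alternatively, one may skip the explicit bijection entirely and simply invoke the well-known enumeration of non-nesting perfect matchings on $[2n]$ by the Catalan numbers (see, e.g., \cite[Exercise 6.19]{EC2}); the FIFO description above is included only because it is short and keeps the argument self-contained.
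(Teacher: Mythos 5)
Your proposal is correct and takes essentially the same route as the paper: both reduce the claim to the preceding proposition identifying the interval with the set of non-nesting perfect matchings, and then count those by $C_n$. The paper simply cites the Catalan enumeration of non-nesting matchings as well known, whereas you additionally supply a (correct) proof via the Dyck-word/FIFO bijection, which you yourself note is optional.
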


\begin{proof}
It is well-known that the non-nesting matchings on $[2n]$ are counted by the Catalan numbers, so this follows from the previous observation.      
\end{proof}


\section{Further directions}\label{sec:further_directions}

Section \ref{sec:matchings} suggests that it may be worth looking into intervals in the combinatorial barcode lattice from the perspective of matchings.
The order dimension of the combinatorial barcode lattice is currently unknown.
Flath characterized the order dimension of the multinomial Newman lattice in \cite{OrderDimOfLm} using techniques from formal context analysis.
Flath's proof does not immediately apply to the combinatorial barcode lattice, in part because the meet-irreducible elements of the combinatorial barcode lattice do not have as simple a characterization as the meet-irreducible elements of the multinomial Newman lattice, but we are hopeful that Flath's proof could be modified to give the order dimension of the barcode lattice.

Also, recall that we saw that in general
\[\sum_{\sigma\in\mathbf{BL}(\mathbf m)}q^{\mathrm{inv}(\sigma)}\neq\sum_{\sigma\in\mathbf{BL}(\mathbf{m})}q^{\mathrm{maj}(\sigma)}.\]
The left side is the rank-generating function, which we proved is
$\prod_{i=1}^{n}\sqbinom{(\sum_{j=i}^nm_j)-1}{m_i-1}_q$,
but we do not yet have a description of the right hand side.
Combinatorial interpretations of the inversions and major index statistics in terms of the properties of the barcodes would also be of interest.

Parts of the work in this paper can be generalized to other classes of posets, particularly those of the form $\mathcal L(P,\omega)$ mentioned in section \ref{sec:initial_results}.
A first step may be to determine for which $(P,\omega)$ can we use similar proofs as to those in Sections \ref{sec:rank_gen_func} and \ref{sec:max_chains} to find the rank-generating function and number of maximal chains in $\mathcal L(P,\omega)$.


\section*{Acknowledgments}
The authors thank Elena Jaramillo-Rodriguez for her fruitful correspondence. 
ARVM is partially supported by the NSF under Award DMS-2102921. 
This material is also based in part upon work supported by the NSF Grant DMS-1928930 and the Alfred P. Sloan Foundation under grant G-2021-16778, while ARVM was in residence at the Simons Laufer Mathematical Sciences Institute in Berkeley, California, during the Fall 2023 semester.

\bibliographystyle{amsplain}
\bibliography{b}


\end{document}